\documentclass[letterpaper, 10pt, conference]{ieeeconf}

\usepackage{amssymb}
\usepackage{amsmath,amsfonts,mathrsfs}
\newtheorem{Lemma}{Lemma}
\newtheorem{Corollary}{Corollary}

\newtheorem{Definition}{Definition}
\newtheorem{Remark}{Remark}
\newtheorem{Notation}{Notation}
\newtheorem{Theorem}{Theorem}

\newtheorem{Proposition}{Proposition}
\newtheorem{Problem}{Problem}
\newcommand{\Rank}{\mathrm{Rank}}
\newcommand{\LIN}{\mathcal{S}}
\newcommand{\LLIN}{\mathcal{L}}
\newcommand{\V}{\mathcal{V}}
\newcommand{\IM}{\mathrm{im}}
\newcommand{\LMAP}{\mathcal{M}}
\newcommand{\LOPT}{\mathcal{M}_{opt}}

\bibliographystyle{plain}

\begin{document}
\title{Infinite horizon control and minimax observer design for linear DAEs}
\date{}
\author{Sergiy Zhuk$^\dag$ and Mihaly Petreczky$^\ddag$ \\
      $^\dag$ IBM Research, Dublin, Ireland, \texttt{sergiy.zhuk@ie.ibm.com},\\
      $^\ddag$Ecole des Mines de Douai, Douai, France, \texttt{mihaly.petreczky@mines-douai.fr}}

\maketitle
\begin{abstract}
In this paper we construct an infinite horizon minimax state observer for a linear stationary
differential-algebraic equation (DAE) with uncertain but bounded input and noisy output.
We do not assume regularity or existence of a (unique) solution for any initial state
of the DAE.
Our approach is based on a generalization of Kalman's duality principle.
In addition, we obtain a solution of  infinite-horizon linear quadratic optimal control problem for DAE.
\end{abstract}

\section{Introduction}
\label{sec:introduction}
 Consider a linear Differential-Algebraic Equation (DAE) with state $x$, output $y$ and noises $f$ and $\eta$:
\begin{align*}
  &\dfrac {d(Fx)}{dt}=Ax(t)+f(t), \quad Fx(t_0)=x_0\,, \\ 
  &y(t) = Hx(t) +\eta(t) 
\end{align*}
where $F, A\in\mathbb R^{n \times n}$, $H\in\mathbb R^{p\times n}$.
We do not restrict DAE's coefficients, in particular, we do not require that it has a
solution for any initial condition $x_0$ or that this solution is unique.
The only assumption we impose is that $x_0$, $f$ and $\eta$ are uncertain but bounded and belong to an ellipsoid in $L^2$. We will consider only solutions which are
locally integrable functions. We would like to estimate a state component $\ell^TFx(t)$, $\ell \in \mathbb{R}^n$ of the DAE based on the output $y$. The desired \emph{observer} should be linear in $y$, i.e. we are looking for maps $U(t,\cdot)\in L^2$ such that the estimate of $\ell^TFx(t)$ at time $t$ is of the form $\int_0^{t} U(t,s)y(s)ds$.
The goal of the paper is to find an observer $U$ such that:
\begin{enumerate}
\item
  The worst-case asymptotic observation error $\limsup_{t \rightarrow \infty} \sup_{f,\eta} (\ell^T Fx(t)-\int_0^{t} U(t,s)y(s)ds )^2$ is minimal, and
\item
  $U$ can be implemented by a stable LTI system, i.e.
  the estimate $t \mapsto \int_0^t U(t,s)y(s)ds$
  should be the output of a stable LTI system whose input is $y$.
\end{enumerate}
 We will call the observers defined above \emph{minimax observers}.

\textbf{Motivation}
The minimax approach is one of many classical ways to pose a state estimation problem.
We refer the reader to \cite{Tempo1985,Chernousko1994,Nakonechnii1978} and
\cite{Kurzhanski1997} for the basic information on the minimax framework.
Apart from pure theoretical reasons our interest in the minimax problem
is motivated by applications of DAE state estimators in practice.
In~\cite{Zhuk2012sysid} we briefly discussed one application of DAEs to non-linear filtering problems.
Namely, it is well known (see~\cite{GihmanSkorokhod1997}) that the density of a wide class of
non-linear diffusion processes solves forward Kolmogorov equation. The latter is a linear
parabolic PDE and its analytical solution is usually unavailable.
Different approximation techniques exist, though. One can project the density onto a finite
dimensional subspace and derive a DAE for the projection coefficients.
The resulting DAE will contain additive noise terms which represent the
projection error (see~\cite{MalletZhukPhyscon2011,ZhukDAEGalerkin2013} for details).
The worst-case state estimates of this DAE can be used to construct a state estimate of the non-linear diffusion process.

Besides, DAEs have a wide range of applications, without claiming completeness, we mention
robotics~\cite{Schiehlen2000}, cyber-security~\cite{PasqualettiCyberSecurity}
and modeling various systems \cite{muller2000proscons}.
We conjecture that the results of this
paper will be useful for many of the domains in which DAEs are used.

\textbf{Contribution of the paper}
 In this paper we follow the procedure proposed in~\cite{Zhuk2012sysid}: first,
 we apply a generalization of
 Kalman's duality principle in order to transform the minimax estimation
 problem  into a dual optimal control problem for the adjoint DAE.
 The latter control problem is an infinite horizon linear quadratic optimal control problem
 for DAEs.
  Duality allows us to view the observer $U$ as a control input of the adjoint system and
  to view the worst-case estimation error
  $\limsup_{t \rightarrow \infty} \sup_{f,\eta} (\ell^TFx(t)-\mathcal{O}_{U}(t))^2$ as the
  quadratic cost function of the dual control problem.
  Thus, the solution of the dual control yields an observer whose worst-case asymptotic
  error is the minimal one.
  The resulting dual control problem is then solved by translating it to a
  classical optimal control problem for LTIs. The solution of the latter
  problem yields a stable autonomous LTI systems, whose output is the solution of dual control problem.
  The translation of the dual control problem to an LTI control problem
  relies on linear geometric control theory
  \cite{TrentelmanBook,GeomBook2}: the state and input trajectories of the DAE correspond
  to trajectories of an LTI restricted to its largest output zeroing subspace.
  To sum up, in this paper we solve
  the \textbf{(1)} minimax estimation problem, and the \textbf{(2)} infinite horizon optimal
  control problem for DAEs. In addition, we do no impose a-priori restrictions on $F$ and $A$.

 \textbf{Related work}
   To the best of our knowledge, the results of this paper are new.
   The literature on DAE is vast, but most of the papers concentrate on regular DAEs.
   The papers \cite{Xu2007,Darouch2009} are probably the closest 
    to the current paper. However,
   unlike in \cite{Xu2007}, we allow non-regular DAEs, and unlike
   \cite{Darouch2009}, we do not require impulsive observability.
   In addition, the solution methods are also very different.
   The finite horizon minimax estimation problem and the corresponding optimal control
   problem for general DAEs was presented in~\cite{Zhuk2012sysid}.
   A different way of representing solutions of DAEs as outputs of a LTI were
   presented in \cite{Zhuk2012sysid} too.
    We note that a feed-back control for finite and
    infinite-horizon LQ control problems with stationary DAE constraints was constructed
    in~\cite{BenderLaub1987IEEETAC} assuming that the matrix pencil $F-\lambda A$ was regular.
    It was mentioned in~\cite{Zhuk2012sysid} that transformation of
    DAE  into Weierstrass canonical form may require taking derivative of the model error $f$,
    which, in turn, leads to restriction of the
    admissible class of model errors. In contrast, our approach is valid for $L^2$-model
    errors, which makes it more attractive for applications.
     Generalized Kalman duality principle for non-stationary DAEs with non-ellipsoidal
     uncertainty description was introduced in~\cite{Zhuk2012AMO} where it was applied to get
     a sub-optimal infinite-horizon observer.
     The infinite-horizon LQ control problem for non-regular DAE was also addressed in \cite{LQGNonReg}, but unlike this paper, there it is assumed that the DAE has a solution from any initial state. Optimal control of non-linear and time-varying DAEs was also addressed in
the literature. Without claiming completeness we mention \cite{Kurina2007,MehramnnKunk2008}.

\textbf{Outline of the paper}
This paper is organized as follows. Subsection~\ref{sec:notation} contains notations, section~\ref{sec:problem-statement} describes the mathematical problem statement, section~\ref{sec:main} presents the main results of the paper.

\subsection{Notation}
\label{sec:notation}
$S>0$ means $x^TSx>0$ for all $x\in\mathbb R^n$;
$F^+$ denotes the pseudoinverse matrix.
 Let $I$ be either a finite interval $[0,t]$ or the infinite time axis $I=[0,+\infty)$.
 We will denote by $L^2(I,\mathbb{R}^{n})$, $L^2_{loc}(I,\mathbb{R}^{n})$
 the sets of
 all square-integrable, and locally square integrable
 functions $f:I \rightarrow \mathbb{R}^{n}$ respectively.
 Recall that a function is locally square integrable, if its restriction to any compact
 interval is square integrable. If $I$ is a compact interval, then $L^2_{loc}(I,\mathbb{R}^n)=L^2(I,\mathbb{R}^n)$.
 If $\mathbb{R}^n$ is clear from the context and $I=[0,t]$, $t > 0$, we will use the notation
 $L^2(0,t)$ and $L^2_{loc}(0,t)$ respectively.
 If $f$ is a function, and $A$ is a subset of its domain, we denote by $f|_{A}$ the
 restriction of $f$ to $A$.
 We denote by $I_n$ the $n \times n$ identity matrix.


\section{Problem statement}
\label{sec:problem-statement}
Assume that $x(t)\in\mathbb R^n$ and $y(t)\in\mathbb R^p$ represent the state vector and output of the following DAE:
\begin{equation}
\label{eq:dae_output}
\begin{split}
      & \dfrac{d(Fx)}{dt}=Ax(t)+f(t)\,,\quad Fx(0)=x_0\,, \\
      & y(t)=Hx(t)+\eta(t)\,,
\end{split}
\end{equation}
where $F,A\in\mathbb R^{ n \times n}$, $H\in\mathbb R^{p\times n}$, and $f(t)\in\mathbb R^n$, 
$\eta(t)\in\mathbb R^p$ stand for the model error and output noise respectively. 
In this paper we consider the following functional class for DAE's solutions: if $x$ is a 
solution on some finite interval $I=[0,t_1]$ or 
infinite interval $I=(0,+\infty)$, then $x \in L^2_{loc}(I)$, and $Fx$ is absolutely continuous. 
This allows to consider a state vector $x(t)$ with a non-differentiable part belonging to the null-space of $F$. 
We refer the reader to~\cite{Zhuk2012AMO} for further discussion.

In what follows we assume that for any initial condition $x_0$ and any time interval
$I=[0,t_1]$, $t_1 < +\infty$, model error $f$ and output noise $\eta$ are unknown and belong to the given
ellipsoidal bounding set $\mathscr E(t_1):=\{(x_0,f,\eta) \in \mathbb{R}^{n} \times L^2(I,\mathbb{R}^n) \times L^2(I,\mathbb{R}^{p}):\rho(x_0,f,\eta,t_1)\le 1\}$, where
\begin{equation}
  \label{eq:rhox0feta}
\rho(x_0,f,\eta,t_1):=x_0^T Q_0 x_0 + \int_{0}^{t_1} f^TQf + \eta^TR\eta dt\,,
\end{equation}
and $Q_0,Q(t)\in\mathbb R^{n \times n}$, $Q_0=Q_0^T>0$, $Q=Q^T>0$, $R
\in\mathbb R^{p\times p}$, $R^T=R>0$. In other words, we assume that
the triple $(x_0,f,\eta)$ belongs to the unit ball defined by the norm
$\rho$.

First, we study the state estimation problem for finite time interval $[0,t_1]$.
Our aim is to construct the estimate of the linear function of
the state vector $\ell^TFx(t_1)$, $\ell \in \mathbb{R}^n$, given the output $y(t)$ of
\eqref{eq:dae_output}, $t \in
[0,t_1]$.
Following{~\cite{Astrem2006}} we will be looking for an
estimate in the class of linear functionals
$$
\mathcal O_{U,t_1}(y)=\int_{0}^{t_1} y^T(s)U(s)dt\,,
$$
$U \in L^2(0,t_1)$.
Such linear functionals represent linear estimates of a state component
$\ell^TFx(t_1)$ based on past outputs $y$.
We will call functions $U \in L^2(0,t_1)$ \emph{finite horizon observers}.
With each such observer $U$ we will associate an observation error defined as follows.
\begin{equation*}
  \begin{split}
    \sigma(U,t_1,\ell):=\sup_{(x_0,f,\eta)\in \mathscr E(t_1)}(\ell^TFx(t_1)-\mathcal O_{U,t_1}(y))^2\,.
  \end{split}
\end{equation*}
 The observation error $\sigma(U,\ell,t_1)$ represents the biggest estimation
 error of $\ell^TFx(t_1)$ which can be produced by the observer $U$, if we
 assume that the initial state and the noise belong to $\mathscr E(t_1)$.

 So far, we have defined observers which act on finite time intervals. Next, we
 will define an analogous concept for the whole time axis $[0,+\infty)$.
\begin{Definition}[Infinite horizon observers]
  Denote by $\mathscr{F}$ the set of all maps
  $U:\{ (t_1,s) \mid t_1 > 0, s \in [0,t_1]\} \rightarrow \mathbb{R}^p$ such that for every
 $t_1 > 0$,
  the map $U(t_1,\cdot): [0,t_1] \ni s \mapsto U(t_1,s)$ belongs to $L^2(0,t_1)$.

 An element $U \in \mathscr{F}$ will be called an \emph{infinite horizon observer}.
 If $y \in L^2_{loc}(I,\mathbb{R}^{p})$, $I=[0,t_1]$, $t_1 > 0$ or $I=[0,+\infty)$, then
 the result of applying $U$ to $y$ is a function
 $O_{U}(y):I \rightarrow \mathbb{R}$ defined by
 \[ \forall t \in I: O_{U}(y)(t)=O_{U(t,\cdot),t}(y)=\int_0^{t} U^T(t,s)y(s)ds. \]

 The worst-case error for $U \in \mathscr{F}$ is defined as
\[ \sigma(U,\ell):=\limsup_{t_1 \rightarrow \infty}\sigma(U(t_1,\cdot),t_1,\ell). \]
\end{Definition}
 Intuitively, an infinite horizon observer is just a collection of finite horizon observers,
 one for each time interval. It maps any output defined on some interval (finite or infinite)
 to an estimate of a component of the corresponding state trajectory.
 The worst case error of an infinite horizon observer
 represents the largest asymptotic error of estimating $\ell^TFx(t)$ as $t \rightarrow \infty$.

 The effect of applying an infinite horizon observer
 $U \in \mathscr{F}$ to an  output $y \in L^2_{loc}([0,+\infty),\mathbb{R}^{p})$
 of the system \eqref{eq:dae_output} can be described as follows.
 Assume that $y$
 corresponds to some initial state $x_0$ and noises $f$ and $\eta$ such that
 \[ x_0Q_0x_0 + \int_{0}^{+\infty} f^T(t)Qf(t) + \eta^T(t)R\eta(t) dt \le 1. \]
 The latter restriction can equivalently be stated as $(x_0,f|_{[0,t_1]},\eta|_{[0,t_1]}) \in \mathscr{E}(t_1)$, $\forall t_1 > 0$.
 Assume that $x$ is the state trajectory corresponding to $y$. Then
 $O_{U}(y)$ represents an estimate of $\ell^TFx$ and the estimation error is bounded
 from above by $\sigma(U,\ell)$ in the limit, i.e.
 for every $\epsilon > 0$ there exists $T > 0$ such that for all $t > T$
 \[
    \sigma(U,\ell) + \epsilon >  (\ell^TFx(t)-\mathbf{O}_{U}(y)(t))^2
 \]

So far we have defined observers as linear maps mapping past outputs to state estimates.
For practical purposes it is desirable that the observer is represented by a stable LTI system.
\begin{Definition}
 The observer $U \in \mathscr{F}$ can be represented by
 a stable linear system, if there exists $A_o \in \mathbb{R}^{r \times r},B_o \in \mathbb{R}^{r \times p}, C_o\in \mathbb{R}^{1 \times r}$ such that $A_o$ is stable and
for any $y \in L^2_{loc}(I)$, $I=[0,t_1]$, $t_1 > 0$ or $I=[0,+\infty)$, the estimate $\mathcal{O}_{U}(y)$ is the output of the LTI system below:
 \begin{align*}
   & \dot s(t) = A_os(t)+B_oy(t) \mbox{,\ \ } s(0)=0 \\
   & \forall t \in I: \mathcal{O}_{U}(y)(t)=C_os(t).
 \end{align*}
  The system $\mathscr{O}_{U}=(A_o,B_o,C_o)$ is called a
  \emph{dynamical observer} associated with $U$.
\end{Definition}
In addition, we would like to find observers with the smallest possible
worst case observation error.
These two considerations prompt us to define the minimax observer design problem
as follows.
\begin{Problem}[Minimax observer design]
\label{problem:obs}
  Find an observer $\widehat{U} \in \mathscr{F}$ such that
   \begin{equation}
   \label{problem:obs:eq2}
      \sigma(\widehat{U},\ell) = \inf_{U \in \mathscr{F}} \sigma(U,\ell)<+\infty
   \end{equation}
  and $\widehat{U}$ can be represented by a stable linear system. In what follows we will refer to such $\widehat{U} \in \mathscr{F}$ as minimax observer.
\end{Problem}

\section{Main results}
\label{sec:main}
In this section we present our main result: minimax observer for the infinite horizon case.
 First, in \S \ref{sec:dual} we present the dual \emph{optimal control problem}
 for infinite horizon case.
 This dual control problem which we are going to formulate is interesting itself.
 In order to solve the optimal control problem, we will use
 the concept of output zeroing space from the geometric control.
This technique allows us to construct an LTI system whose outputs are solutions of the
original DAE.  This will be discussed in \S \ref{dae:geo}.
In \S \ref{dae:opt} we reformulate the dual
optimal control problem as a linear quadratic infinite
horizon control problem for LTIs. The solution of the latter problem yields a solution to
the dual control problem.
Finally, in \S \ref{sec:obs} we present the formulas for the minimax observer and discuss the conditions for its existence.

\subsection{Dual control problem}
\label{sec:dual}
 We will start with formulating an optimal control problem for
 DAEs. Later on, we will show that the solution of this
 control problem yields a solution to the minimax observer design problem.
  Consider the DAE $\Sigma$:
 \begin{equation}
 \label{dae:sys}
   \dfrac{dEx}{dt} = \hat{A}x(t) + \hat{B}u(t) \mbox{ and } Ex(0)=Ex_0.
\end{equation}
Here $x_0 \in \mathbb{R}^{n}$ is a fixed initial state and
$\hat{A},E \in \mathbb{R}^{n \times n}$, $\hat{B} \in \mathbb{R}^{n \times m}$.
\begin{Notation}[$\mathscr{D}_{x_0}(t_1)$ and $\mathscr{D}_{x_0}(\infty)$].
\label{sol:not}
 For any $t_1 \in [0,+\infty]$ denote by $I$ the interval $[0,t_1] \cap [0,+\infty)$
 and denote by
  \( \mathscr{D}_{x_0}(t_1) \) the set of all pairs
\( (x,u) \in L^2_{loc}(I,\mathbb{R}^n) \times L^2_{loc}(I,\mathbb{R}^m) \) such that
 $Fx$ is absolutely continuous and $(x,u)$ satisfy \eqref{dae:sys}.
\end{Notation}
 Note that we did not assume that the DAE is regular, and hence
 there may exist initial states $x_0$ such that
 $\mathcal{D}_{x_0}(t_1)$ is empty for some $t_1 \in [0,+\infty]$.
 \begin{Problem}[Optimal control problem:]
\label{opt:contr:def}
 Take $R\in\mathbb R^{m\times m},Q,Q_0\in\mathbb R^{n\times n}$ and assume that $R > 0, Q> 0$ $Q_0 \ge 0$.
 For any initial state $x_0 \in \mathbb{R}^n$,
 and any trajectory $(x,u) \in \mathscr{D}_{x_0}(t)$, $t > t_1$ define the cost
 functional
 \begin{equation}
 \label{opt1.1}
 \begin{split}
    & J(x,u,t_1) = x(t_1)^TE^TQ_0Ex(t_1)+\\
     & + \int_0^{t_1} (x^T(s)Qx(s)+u^T(s)Ru(s)) ds.
 \end{split}
 \end{equation}
  For every $(x,u) \in \mathscr{D}(\infty)$, define
 \[ J(x,u)=\limsup_{t_1 \rightarrow \infty} J(x,u,t_1)\,. \]

  The infinite horizon optimal control problem for \eqref{dae:sys}
  is the problem of finding a tuple of matrices $(A_c,B_c,C_x,C_u)$ such that
 $A_c \in \mathbb{R}^{r \times r},B_c \in \mathbb{R}^{r \times n}$ ,
 $C_x \in \mathbb{R}^{n \times r}$, $C_u \in \mathbb{R}^{m \times r}$,
  $A_c$ is a stable matrix,
  $B_cEC_x=I_{r}$, and
  for any $x_0 \in \mathbb{R}^{n}$ such that
  $\mathcal{D}_{x_0}(\infty) \ne \emptyset$,
 the output of the system
 \begin{equation}
 \label{dyncontr:eq}
  \begin{split}
   \dot s(t) = A_cs(t) \mbox{ and } s(0)=B_cEx_0 \\
    x^{*}(t)=C_x s(t) \mbox{ and } u^{*}(t)=C_{u}s(t),
  \end{split}
\end{equation}
is such that $(x^{*},u^{*}) \in \mathscr{D}_{x_0}(\infty)$, and
  \begin{equation}
  \label{opt:eq2}
     J(x^{*},u^{*}) = \limsup_{t_1 \rightarrow \infty } \inf_{(x,u) \in \mathscr{D}_{x_0}(t_1)} J(x,u,t_1).
 \end{equation}
 The tuple $\mathscr{C}^{*}=(A_c,B_c,C_x,C_u)$ will be called the
 \emph{dynamic controller} which solves the optimal control problem.
 For each $x_0$, the pair $(x^{*},u^{*})$ will be called the solution of
 the optimal control problem for the initial state $x_0$.

  We will denote infinite horizon control problems above
  by $\mathcal{C}(E,\hat{A},\hat{B},Q,R,Q_0)$.
\end{Problem}
Note that the dynamic controller which generates the solutions of
the optimal control problem does not depend on the initial condition, in fact,
the dynamical controller generates a solution for any initial condition, for
which the DAE admits a solution on the whole time axis. 

\begin{Remark}
 The proposed formulation of the infinite horizon control problem is not necessarily
 the most natural one. We could have also required the
 $(x^{*},u^{*}) \in \mathcal{D}(\infty)$ to satisfy
 \( J(x^{*},u^{*}) = \inf_{(x,u)\in \mathscr{D}(\infty)} J(x,u)\).
 It is easy to see that formulation above implies that
 \( J(x^{*},u^{*}) = \inf_{(x,u) \in \mathscr{D}(\infty)} J(x,u)\).
 Another option could have been to use limit instead of
 $\limsup$ in the definition of $J(x^{*},u^{*})$ and in
 \eqref{opt:eq2}.
 In fact, the solution we are going to present remains a solution if we replace
 $\limsup$ by limits.
\end{Remark}
\begin{Remark}[Solution as feedback]
  In our case, the optimal control law $u^{*}$ can be interpreted as
  a state feedback.
  If $\mathcal{C}^{*}=(A_c,B_c,C_x,C_u)$ is the optimal dynamical controller and $x_0 \in \mathbb{R}^n$,
  and $(x^{*},u^{*})$ is as in \eqref{dyncontr:eq}, then
  $s(t)=B_cEC_{x}s(t)=B_cEx^{*}(t)$ and thus
  \( u^{*}(t)=B_cEx^{*}(t). \)
  Note, however, that for DAEs the feedback law does not determine the control
  input uniquely, since even autonomous DAEs may admit several solutions starting
  from the same initial state.
  If the DAE has at most one solution from any
  initial state, in particular, if the DAE is regular, then the feedback law
  above determines the optimal trajectory $x^{*}$ uniquely.
\end{Remark}
\begin{Remark}[Closed-loop stability]
 Since the optimal state trajectory $x^{*}$ is the output of
 a stable LTI, $\lim_{t \rightarrow \infty} x^{*}(t)=0$.
 Hence, if the DAE admits at most one  solution from any initial state, then
 the closed-loop system
 is globally asymptotically stable, i.e.
 for any initial state the corresponding solution converges to zero.
\end{Remark}
 Now we are ready to present the relationship between Problem \ref{opt:contr:def}
 and Problem \ref{problem:obs}.
 \begin{Definition}[Dual control problem]
  The dual control problem for the observer design problem
  is the control
  problem
  $\mathcal{C}(F^T,A^T,-H^T,Q^{-1},R^{-1},\bar{Q}_0)$, 
  where
  \begin{align*}
     & \bar{Q}_0=({F^T}^+ z(0)-\LOPT)^TQ_0^{-1} ({F^T}^+z(0)-\LOPT).
  \end{align*}
   Here $\LOPT$ is defined as follows.
  Let $r=\Rank F^T$ and $U \in \mathbb{R}^{n \times (n-r)}$ such that
  $\IM U = \ker F^T$ and define $\LOPT=U(U^TQ^{-1}_0U)^{-1}U^TQ^{-1}_0{F^T}^{+}$.
 \end{Definition}
 \begin{Theorem}[Duality]
 \label{p:5}
  Let $\mathscr{C}_{u^{*}}=(A_c,B_c,C_x,C_u)$
   be the dynamic controller solving
  the dual control problem. Let $(x^{*},u^{*})$ be the corresponding solution
  of the optimal control problem for $x_0=\ell$.
  Then $\widehat{U}(t_1,s)=u^{*}(t_1-s)$ is the solution of the infinite time horizon
  observer design problem,  and
  \[
    \begin{split}
      & \sigma(\widehat{U},\ell)=J(x^{*},u^{*})= \limsup_{t_1 \rightarrow \infty} \{ {x^{*}}^T(t_1)F\bar{Q}_0F^Tx^{*}(t_1)+\\
           & \int_{0}^{t_1} ({u^{*}}^T(t) R^{-1}u^*(t)+{x^{*}}^T(t)Q^{-1}x^{*}(t))dt\}.
    \end{split}
 \]
 In addition, the dynamical observer
  $\mathscr{O}_{\widehat{U}}$ is of the form
  \begin{align*}
     & \dot s(t) = A^T_cs(t)+C^T_u y(t) \mbox{, } s(0)=0 \\
     & \mathcal{O}_{\hat{U}}(y)(t)=\ell^TFB^T_cs(t)
  \end{align*}
  Moreover, if  
  $y \in L^2_{loc}([0,+\infty),\mathbb{R}^p)$ is the output of \eqref{eq:dae_output} for $f=0$ and $\eta=0$,
  then the estimation error $(\ell^TFx(t)-\mathcal{O}_{\hat{U}}(y)(t))$ converges to zero as
  $t \rightarrow \infty$.
 \end{Theorem}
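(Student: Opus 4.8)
The plan is to deduce Theorem~\ref{p:5} from the finite-horizon Kalman duality of~\cite{Zhuk2012sysid} by letting the horizon tend to infinity, invoking the fact (established in~\S\ref{dae:geo}--\S\ref{dae:opt}) that $\mathscr{C}_{u^{*}}$ solves the infinite-horizon dual control problem $\mathcal{C}(F^T,A^T,-H^T,Q^{-1},R^{-1},\bar Q_0)$. The argument has three steps: the finite-horizon correspondence, the passage to the limit, and reading off the observer formulas.

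\emph{Step 1 (finite horizon).} Fix $t_1>0$ and $U(t_1,\cdot)\in L^2(0,t_1)$, and consider the adjoint DAE on $[0,t_1]$
\[ -\frac{d(F^Tp)}{ds}=A^Tp(s)-H^TU(t_1,s),\qquad F^Tp(t_1)=F^T\ell. \]
Whenever this DAE admits a solution $p$ with $F^Tp$ absolutely continuous, I integrate $p$ against $\frac{d(Fx)}{ds}=Ax+f$ by parts — the rule valid when only $Fx$ and $F^Tp$ are absolutely continuous is the one used in~\cite{Zhuk2012AMO} — and substitute $Hx=y-\eta$, obtaining the key identity
\[ \ell^TFx(t_1)-\mathcal{O}_{U,t_1}(y)=p(0)^Tx_0+\int_0^{t_1}p^Tf\,ds-\int_0^{t_1}U(t_1,\cdot)^T\eta\,ds. \]
Thus the estimation error is a fixed bounded linear functional of $(x_0,f,\eta)$, and $\sigma(U(t_1,\cdot),t_1,\ell)$ is the supremum of its square over $\mathscr{E}(t_1)$: the maximisation over $f,\eta$ is routine, while that over $x_0$ must respect the solvability constraint $x_0\in\IM F$ of~\eqref{eq:dae_output}, which is exactly the constrained least-squares problem producing $\LOPT$ and turning $p(0)^Tx_0$ into a quadratic form with $\bar Q_0$ (see the definition of the dual control problem). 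One gets
\[ \sigma(U(t_1,\cdot),t_1,\ell)=p(0)^TF\bar Q_0F^Tp(0)+\int_0^{t_1}\!\big(p^TQ^{-1}p+U(t_1,\cdot)^TR^{-1}U(t_1,\cdot)\big)ds, \]
with $\sigma(U(t_1,\cdot),t_1,\ell)=+\infty$ when the adjoint DAE is unsolvable. Time reversal $z(s):=p(t_1-s)$, $u(s):=U(t_1,t_1-s)$ turns the adjoint DAE into $\frac{d(F^Tz)}{ds}=A^Tz-H^Tu$ with $F^Tz(0)=F^T\ell$, i.e.\ $(z,u)\in\mathscr{D}_{\ell}(t_1)$, and rewrites the last display as $J(z,u,t_1)$ for $\mathcal{C}(F^T,A^T,-H^T,Q^{-1},R^{-1},\bar Q_0)$; conversely every $(z,u)\in\mathscr{D}_{\ell}(t_1)$ arises this way. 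Hence, for each $t_1>0$,
\[ \inf_{U(t_1,\cdot)\in L^2(0,t_1)}\sigma(U(t_1,\cdot),t_1,\ell)=\inf_{(x,u)\in\mathscr{D}_{\ell}(t_1)}J(x,u,t_1). \]

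\emph{Step 2 (limit).} Let $(x^{*},u^{*})\in\mathscr{D}_{\ell}(\infty)$ be the solution generated by $\mathscr{C}_{u^{*}}$ for $x_0=\ell$, and set $\widehat{U}(t_1,s)=u^{*}(t_1-s)$. For each $t_1$ the time reversal of $\widehat{U}(t_1,\cdot)$ is $u^{*}|_{[0,t_1]}$, with associated dual state $x^{*}|_{[0,t_1]}$, so Step~1 yields $\sigma(\widehat{U}(t_1,\cdot),t_1,\ell)=J(x^{*},u^{*},t_1)$; combining with~\eqref{opt:eq2} and the equality of infima above,
\[ \sigma(\widehat{U},\ell)=\limsup_{t_1\to\infty}J(x^{*},u^{*},t_1)=J(x^{*},u^{*})=\limsup_{t_1\to\infty}\ \inf_{U(t_1,\cdot)\in L^2(0,t_1)}\sigma(U(t_1,\cdot),t_1,\ell). \]
For an arbitrary $U\in\mathscr{F}$, $\sigma(U,\ell)=\limsup_{t_1\to\infty}\sigma(U(t_1,\cdot),t_1,\ell)\ge\limsup_{t_1\to\infty}\inf_{U'(t_1,\cdot)}\sigma(U'(t_1,\cdot),t_1,\ell)=\sigma(\widehat{U},\ell)$, so $\widehat{U}$ attains $\inf_{U\in\mathscr{F}}\sigma(U,\ell)$, and this value is finite because $A_c$ is stable and hence $J(x^{*},u^{*})<+\infty$. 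The displayed chain of equalities is also the asserted cost formula, its last term being $J(x^{*},u^{*})$ written out for $\mathcal{C}(F^T,A^T,-H^T,Q^{-1},R^{-1},\bar Q_0)$.

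\emph{Step 3 (observer formulas) and the main obstacle.} From~\eqref{dyncontr:eq} applied to the dual problem, $u^{*}(t)=C_ue^{A_ct}B_cF^T\ell$, whence
\[ \mathcal{O}_{\widehat{U}}(y)(t)=\int_0^{t}u^{*}(t-s)^Ty(s)\,ds=\ell^TFB_c^T\!\int_0^{t}e^{A_c^T(t-s)}C_u^Ty(s)\,ds=\ell^TFB_c^Ts(t), \]
where $\dot s=A_c^Ts+C_u^Ty$, $s(0)=0$; since $A_c$ is stable so is $A_c^T$, giving the claimed dynamical observer. Finally, if $y=Hx$ is the noise-free output of~\eqref{eq:dae_output} ($f=0$, $\eta=0$) for a state trajectory $x$ with $x_0=Fx(0)$, the Step~1 identity on horizon $t$ collapses to $\ell^TFx(t)-\mathcal{O}_{\widehat{U}}(y)(t)=p_t(0)^Tx_0$, and since the reversal of $x^{*}|_{[0,t]}$ solves the adjoint DAE generated by $\widehat{U}(t,\cdot)$, we may take $p_t(0)=x^{*}(t)=C_xe^{A_ct}B_cF^T\ell\to0$, so the estimation error tends to $0$. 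The step I expect to be the main obstacle is Step~1: making the integration-by-parts identity rigorous for trajectories that lie only in $L^2_{loc}$ with just $Fx$, $F^Tp$ absolutely continuous, and carrying out the constrained supremum over $x_0$ correctly for a non-regular DAE — this is where $\bar Q_0$ and $\LOPT$ arise, and where the bidirectional tightness of the correspondence $U(t_1,\cdot)\leftrightarrow\mathscr{D}_{\ell}(t_1)$ (in particular that an unsolvable adjoint forces $\sigma=+\infty$) has to be verified. Once Step~1 is secured, the $\limsup$/$\inf$ exchange in Step~2 and the computations in Step~3 are routine.
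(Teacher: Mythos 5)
Your proposal is correct and follows essentially the same route as the paper: the paper's proof simply cites the finite-horizon duality you derive in Step 1 as Proposition~\ref{p:1} from~\cite{Zhuk2012sysid} (with your constrained supremum over $x_0$ packaged there as the auxiliary variable $d$ and eliminated via $\LOPT$ in Proposition~\ref{prop1}), then performs the same time reversal, the same sandwich argument over $t_1\to\infty$, and the same computation of the observer realization and of the noise-free error via the integration-by-parts identity. The only caveat is that, because a non-regular DAE may admit several state trajectories for a fixed input, your per-horizon formula for $\sigma(U(t_1,\cdot),t_1,\ell)$ should be an infimum over all adjoint trajectories compatible with $U$ (as in part (2) of Proposition~\ref{p:1}); this only turns your pointwise equality $\sigma(\widehat U(t_1,\cdot),t_1,\ell)=J(x^{*},u^{*},t_1)$ into an inequality $\le$, which is all the sandwich in Step 2 actually needs.
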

  Note that the matrices of the observer presented in Theorem \ref{p:5} depend on
  $\ell$ only through the equation $\mathcal{O}_{\hat{U}}(y)(t)=\ell^TFB^T_c s(t)$.
  Hence, if a solution to the dual control problem exists, then it yields an
  observer for any $\ell$, for which the dual DAE
  \( \dfrac{d(F^{T}z(t))}{dt}=A^Tz(t)-H^Tv(t),\quad F^Tz(0)=F^T\ell\)
  has a solution defined on the whole time axis.
  
  Theorem \ref{p:5} implies that existence of a solution of
  the dual control problem is a sufficient condition for existence of a solution 
  for Problem \ref{problem:obs}. 
  In fact, we conjecture that this condition is also a necessary one. 

 \begin{proof}[Proof of Theorem \ref{p:5}]
 Recall from \cite{Zhuk2012sysid} the following duality principle:
\begin{Proposition}
\label{p:1}
 Consider the adjoint DAE:
\begin{equation}
    \label{eq:zul}
\dfrac{d(F^{T}z(t))}{dt}=-A^Tz(t)+H^Tv(t),\quad F^Tz(t_1)=F^T\ell\,.
  \end{equation}
\textbf{(1)}
There exists $U \in L^{2}(0,t_1)$ such that $\sigma(U,\ell,t_1) < +\infty$ iff there exists
$z \in L^2(0,t_1)$ and $v \in L^2(0,t_1)$ such that $F^Tz$ is absolutely continuous and
$(z,v)$ satisfies \eqref{eq:zul}. \\
\textbf{(2)}
 Denote by $\mathscr{DD}(t_1)$ is the set of all tuples $(z,d,v) \in L^2(0,t_1) \times \mathbb{R}^n \times L^2(0,t_1)$ such that $F^Tz$ is absolutely continuous and  $(z,v)$ satisfy \eqref{eq:zul} and
$F^Td=0$.
 For all  $(z,d,v) \in \mathscr{DD}(t_1)$, define
\begin{equation}
  \label{eq:umin}
  \begin{split}
&\mathscr I(z,d,v,t_1):=
\int_{0}^{t_1}(v^T(t)R^{-1}v(t)+z^T(t)Q^{-1}z(t))dt\\
&+({F^T}^+F^T z(0)-d)^TQ_0^{-1} ({F^T}^+F^T z(0)-d)\ \\
  \end{split}
 \end{equation}
For any $U \in L_2(0,t_1)$ such that $\sigma(U,\ell,t_1) <+\infty$,
\[
   \sigma(U,\ell,t_1)=\inf_{(z,d,v) \in \mathcal{DD}(t_1), v=U} \mathscr{I}(z,d,v,t_1).
\]
\textbf{(3)}
Moreover, if $\inf_{U \in L_2(0,t_1)} \sigma(U,\ell,t_1)<+\infty$, then
there exists $(z^{*},d^{*},\widehat{U}) \in \mathscr{DD}(t_1)$ such that
\begin{equation}
\begin{split}
\label{p:1:eq1}
  & \sigma(\hat{U},\ell,t_1)=
  \inf_{U \in L_2(0,t_1)} \sigma(U,\ell, t_1) =\\
  & = \inf_{(z,d,v) \in \mathscr{DD}(t_1)} \mathscr{I}(z,d,v,t_1) =
     \mathscr{I}(z^{*},d^{*},\hat{U},t_1)
\end{split}
\end{equation}
\end{Proposition}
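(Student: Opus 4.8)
The plan is to derive all three parts from a single Lagrange-type identity coupling the primal DAE \eqref{eq:dae_output} with its adjoint \eqref{eq:zul}, and then to read off (1)--(3) as statements about the resulting bounded linear functional and its dual norm. The core computation: for a primal trajectory $x$ with data $(x_0,f,\eta)$ and any $(z,v)$ solving \eqref{eq:zul} with $v=U$, I would establish the identity $\frac{d}{dt}(z^TFx)=v^THx+z^Tf$ a.e.\ on $[0,t_1]$. Since in our solution class only $Fx$ and $F^Tz$ are absolutely continuous while $x,z$ need not be, this must be read in the weak sense appropriate to the class, and its justification is the one technical lemma I would import from the weak formulation of DAE solutions (cf.\ \cite{Zhuk2012AMO}). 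Integrating over $[0,t_1]$ and inserting $F^Tz(t_1)=F^T\ell$ and $Fx(0)=x_0$ gives $\ell^TFx(t_1)=z(0)^Tx_0+\int_0^{t_1}(v^THx+z^Tf)\,dt$; subtracting the observer output $\mathcal{O}_{U,t_1}(y)=\int_0^{t_1}(v^THx+v^T\eta)\,dt$ cancels the $\int v^THx$ terms and yields the dual error formula $e:=\ell^TFx(t_1)-\mathcal{O}_{U,t_1}(y)=z(0)^Tx_0+\int_0^{t_1}z^Tf\,dt-\int_0^{t_1}v^T\eta\,dt$. The decisive feature is that $e$ is a $\rho$-bounded linear functional of $(x_0,f,\eta)$ carrying no residual dependence on the (possibly non-unique, possibly non-existent) state $x$.

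For part (1), the direction $(\Leftarrow)$ is then immediate: existence of $(z,v=U)$ exhibits $e$ as a bounded functional, so $\sigma(U,\ell,t_1)=\sup_{\rho\le1}e^2<+\infty$. The direction $(\Rightarrow)$ is the functional-analytic crux: $\sigma(U,\ell,t_1)<+\infty$ says that $(x_0,f,\eta)\mapsto\ell^TFx(t_1)-\mathcal{O}_{U,t_1}(y)$, defined on the set of admissible (primal-solvable) data, is $\rho$-bounded. I would view the primal DAE as a closed operator whose range and adjoint are controlled by the closed range theorem; boundedness of the functional then places $U=v$ in the domain of the adjoint, which is exactly solvability of \eqref{eq:zul}. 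I expect this to be the \emph{main obstacle}, precisely because non-regularity allows the primal DAE to have no solution, or several, for given data, so one cannot invert the dynamics directly and must rely on the closed-range / no-duality-gap machinery.

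For part (2), with $U$ fixed and $\sigma(U)<\infty$, I would compute $\sigma(U)=\sup_{\rho\le1}e^2$ as a constrained dual norm. Because $x_0=Fx(0)\in\IM F$, only the component ${F^T}^+F^Tz(0)$ of $z(0)$ pairs with $x_0$; maximising over the ellipsoid by Cauchy--Schwarz and over the subspace $\IM F=(\ker F^T)^\perp$ by Hahn--Banach projection duality produces exactly the offset term $\inf_{d\in\ker F^T}({F^T}^+F^Tz(0)-d)^TQ_0^{-1}({F^T}^+F^Tz(0)-d)$ together with $\int_0^{t_1}(z^TQ^{-1}z+v^TR^{-1}v)\,dt$, i.e.\ $\inf_d\mathscr{I}(z,d,U)$. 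The bound $\sigma(U)\le\inf_{(z,d)}\mathscr{I}$ follows at once from Cauchy--Schwarz applied to the dual error formula; the matching lower bound requires producing admissible worst-case data attaining it, which is once more a no-gap statement reducible to the same closed-range argument. Minimising additionally over the non-unique adjoint solutions $z$ leaves the value unchanged, giving $\sigma(U)=\inf_{(z,d,v)\in\mathscr{DD}(t_1),\,v=U}\mathscr{I}(z,d,v)$.

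For part (3), given $\inf_U\sigma(U)<\infty$, I combine (1) and (2): since the observers with finite error are exactly those admitting $(z,d,U)\in\mathscr{DD}(t_1)$, one gets $\inf_U\sigma(U,\ell,t_1)=\inf_{(z,d,v)\in\mathscr{DD}(t_1)}\mathscr{I}(z,d,v)$. Attainment then follows from the direct method: $\mathscr{DD}(t_1)$ is a closed affine subspace of the Hilbert space $L^2(0,t_1)\times\mathbb{R}^n\times L^2(0,t_1)$ (closedness coming from that of the differentiation operator defining \eqref{eq:zul}), and $\mathscr{I}$ is a convex, weakly lower semicontinuous quadratic which is coercive on $\mathscr{DD}(t_1)$: the terms $\int z^TQ^{-1}z$ and $\int v^TR^{-1}v$ control $(z,v)$, hence $F^Tz(0)$ and ${F^T}^+F^Tz(0)$ through the adjoint dynamics, after which the $Q_0^{-1}$ term controls $d$. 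A minimiser $(z^{*},d^{*},\widehat{U})$ therefore exists, and taking $\widehat{U}$ as its third component yields $\sigma(\widehat{U},\ell,t_1)=\mathscr{I}(z^{*},d^{*},\widehat{U})=\inf_U\sigma(U,\ell,t_1)$, which is \eqref{p:1:eq1}.
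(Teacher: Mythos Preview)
The paper does not actually prove Proposition~\ref{p:1}: it is introduced with ``Recall from \cite{Zhuk2012sysid} the following duality principle'' and is used as an imported result, with the form of the adjoint DAE attributed to \cite{Zhuk2007}. There is therefore no in-paper proof to compare your proposal against.

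That said, your outline is precisely the standard route to this generalized Kalman duality, and it matches what the cited references \cite{Zhuk2012sysid,Zhuk2012AMO} do. The Lagrange identity you state is correct once both the primal and adjoint DAE constraints are used (the algebraic equations coming from the rows of $F$ and $F^T$ with zero blocks are what make the cross terms cancel); the clean way to see that $z^TFx$ is absolutely continuous is to write $z^TFx=(F^Tz)^TF^{+}(Fx)$ via $F=FF^{+}F$, exactly as the paper itself does later in the proof of Theorem~\ref{p:5} for the noise-free error analysis. One small correction: in your dual error formula the boundary term should be written as $(F^Tz(0))^TF^{+}x_0$ rather than $z(0)^Tx_0$, since only $F^Tz(0)$ is well defined pointwise; this is where the ${F^T}^{+}F^Tz(0)$ in $\mathscr{I}$ genuinely comes from, and it makes your subsequent Hahn--Banach projection argument for part~(2) exact rather than heuristic. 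Your identification of the closed-range argument as the crux of $(1)(\Rightarrow)$ and of the direct method for attainment in~(3) is on target.
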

 Note that in ~\cite{Zhuk2007} it was proved that the DAE adjoint to~\eqref{eq:dae_output} has the form~\eqref{eq:zul}.
 Proposition \ref{p:1} allows us to reduce the problem of minimax
observer design to that of finding an optimal controller.
 To this end, we transform
 slightly the statement of Proposition \ref{p:1}.
 First, we get rid of the component $d$ of the optimization problem
 from Proposition \ref{p:1}.
\begin{Proposition}
\label{prop1}
  Let $(z,v)$ be a solution of \eqref{eq:zul} such that
  $z \in L^2(0,t_1)$, $v \in L^2(0,t_2)$, $F^Tz$ is  absolutely continous.
  Then
  $\inf_{d \in \mathbb{R}^{n}, F^Td=0} \mathscr{I}(z,d,v,t_1)=\mathscr{I}(z,\LOPT(F^{T}z(0)),v,t_1)$.
 \end{Proposition}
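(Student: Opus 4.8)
The plan is to exploit the fact that $\mathscr{I}(z,d,v,t_1)$ depends on $d$ only through its boundary term, so that the infimum over $\{d \in \mathbb{R}^n : F^Td=0\}$ collapses to a finite–dimensional constrained quadratic minimization. Writing $b := {F^T}^{+}F^Tz(0)$, I would first record that
\[
  \inf_{F^Td=0}\mathscr{I}(z,d,v,t_1) = \int_0^{t_1}\!\big(v^TR^{-1}v+z^TQ^{-1}z\big)\,dt + \inf_{F^Td=0}(b-d)^TQ_0^{-1}(b-d),
\]
since the integral part is independent of $d$. Thus everything reduces to projecting $b$ onto the subspace $\{d:F^Td=0\}=\ker F^T$ in the inner product induced by $Q_0^{-1}$, which is a genuine inner product because $Q_0=Q_0^T>0$.

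Next I would parametrize the feasible set. Since $\IM U=\ker F^T$ and $\Rank U=n-r$, every feasible $d$ is $d=U\xi$ for a unique $\xi\in\mathbb{R}^{n-r}$, and the map $\xi\mapsto (b-U\xi)^TQ_0^{-1}(b-U\xi)$ is a strictly convex quadratic: its Hessian $2U^TQ_0^{-1}U$ is positive definite because $Q_0^{-1}>0$ and $U$ is injective. Hence it has a unique minimizer, obtained by setting the gradient to zero, $U^TQ_0^{-1}U\xi = U^TQ_0^{-1}b$, i.e. $\xi^{*}=(U^TQ_0^{-1}U)^{-1}U^TQ_0^{-1}b$. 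Consequently the minimizing $d$ is
\[
  d^{*}=U(U^TQ_0^{-1}U)^{-1}U^TQ_0^{-1}b = U(U^TQ_0^{-1}U)^{-1}U^TQ_0^{-1}{F^T}^{+}F^Tz(0)=\LOPT\!\big(F^Tz(0)\big),
\]
by the very definition of $\LOPT$. I would also note that $d^{*}$ is feasible, $F^Td^{*}=0$, since $d^{*}\in\IM U=\ker F^T$; this shows the constrained infimum is attained at $\LOPT(F^Tz(0))$, which is exactly the asserted identity.

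There is no serious obstacle here: the statement is a weighted least–squares fact in finite dimensions. The only points needing care are (i) confirming that the integral part of $\mathscr{I}$ genuinely carries no $d$–dependence so the problem decouples, (ii) justifying invertibility of $U^TQ_0^{-1}U$ from $Q_0>0$ together with $\Rank U=n-r$, and (iii) matching the closed–form minimizer against the definition of $\LOPT$, in particular using $\ker F^T=\IM U$ so that the constraint $F^Td=0$ coincides with the range condition built into $\LOPT$.
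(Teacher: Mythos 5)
Your argument is correct. Note that the paper itself gives no proof of this proposition at all --- it is stated inside the proof of Theorem~\ref{p:5} and immediately used, evidently being regarded as routine --- so there is no ``paper approach'' to compare against; your write-up simply supplies the missing argument. The route you take is the natural one and is complete: the integral term of $\mathscr{I}$ carries no $d$-dependence, so the problem reduces to minimizing $(b-d)^TQ_0^{-1}(b-d)$, $b={F^T}^{+}F^Tz(0)$, over $d\in\ker F^T=\IM U$; parametrizing $d=U\xi$ gives a strictly convex quadratic (Hessian $2U^TQ_0^{-1}U>0$ since $Q_0^{-1}>0$ and $U$ is injective, $U$ having $n-r=\dim\ker F^T$ columns and full column rank), whose unique stationary point yields $d^{*}=U(U^TQ_0^{-1}U)^{-1}U^TQ_0^{-1}b=\LOPT(F^Tz(0))$, exactly matching the paper's definition of $\LOPT$. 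Since the minimum is attained, the infimum equals $\mathscr{I}(z,\LOPT(F^Tz(0)),v,t_1)$ as claimed; in short, $d^*$ is the $Q_0^{-1}$-orthogonal projection of $b$ onto $\ker F^T$, and your three ``points needing care'' are precisely the right ones.
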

Hence, instead of the cost function $\mathscr{I}(z,d,v,t_1)$, it will be enough to consider
 the cost function:
 \[
  \begin{split}
   & \mathscr{I}(z,v,t_1) = z(0)F\bar{Q}_0F^Tz(0)+\\
   & + \int_{0}^{t_1}(v^T(t)R^{-1}v(t)+z^T(t)Q^{-1}z(t))dt
  \end{split}
 \]
 Next, we replace the DAE \eqref{eq:zul} by the DAE of the dual control problem:
\begin{equation}
\label{p:12:eq1}
   \frac{d(F^Tx(t))}{dt}=A^Tx(t)-H^Tu(t) \mbox{ and } F^Tx(0)=F^T\ell.
\end{equation}
The DAE \eqref{p:12:eq1} is obtained from \eqref{eq:zul} by reversing the time.
In order to present the result precisely, we introduce the following notation.
\begin{Notation}[$\delta_{t_1}$]
 If $r$ is a map defined on $[0,t_1]$, then we denote by $\delta_{t_1}(r)$
 the map $\delta_{t_1}(r)(t)=r(t_1-t)$, $t \in [0,t_1]$.
\end{Notation}
Then $(x,u)$, is a solution of \eqref{p:12:eq1} such that
$x \in L^2(0,t_1)$, $Fx$ is absolutely continuous and $u \in L^2(0,t_1)$,
if and only if $(z,v)=(\delta_{t_1}(x),\delta_1(u))$
is a solution  of \eqref{eq:zul}.

Consider now the dual control problem, and recall that
\[
 \begin{split}
   & J(x,u,t_1)=x^T(t_1)F\bar{Q}_0F^Tx(t_1) +  \\
    & \int_{0}^{t_1} (u^T(t) R^{-1}u(t)+{x}^T(t)Q^{-1}x(t))dt.
 \end{split}
\]
In addition, recall from Notation \ref{sol:not}
that $\mathscr{D}_{\ell}(t_1)$ and $\mathscr{D}_{\ell}(\infty)$
are the sets of solutions $(x,u)$ of \eqref{p:12:eq1} defined on the
interval $[0,t_1]$ and $[0,+\infty)$ respectively.
It is easy to see that
\[ J(x,u,t_1)=\mathscr{I}(\delta_{t_1}(x),\delta_{t_1}(u),t_1)\,. \]
Hence,  Proposition \ref{p:1} can be reformulated as follows.
\begin{Proposition}
\label{p:2}
 There exists $U \in L^2(0,t_1)$ such that $\sigma(U,\ell,t_1) < +\infty$, if
 there exists a solution $(x,u) \in \mathscr{D}_{\ell}(t_1)$ such that
 $\delta_{t_1}(u)=U$.
 If  $U \in L_2(0,t_1)$ is such that $\sigma(U,\ell,t_1) <+\infty$, then
 \[ \sigma(U,\ell,t_1) = \inf_{(x,u) \in \mathcal{D}_{\ell}(t_1), \delta_{t_1}(u)=U } J(x,u,t_1),
\]
 There exists a solution $\hat{U} \in L^2(0,t_1)$ such that
 $\sigma(\hat{U},\ell,t_1)=\inf_{U \in L^2(0,t_1)} \sigma(U,\ell,t_1) < +\infty$,
 iff there exists $(x^{*},u^{*}) \in \mathscr{D}_{\ell}(t_1)$ such that
 \[ J(z^{*},u^{*},t_1) = \inf_{(x,u) \in \mathscr{D}_{\ell}(t_1)} J(x,u,t_1), \]
Then $\widehat{U}$ can be chosen as $\widehat{U}(t)=\delta_{t_1}(u^{*})$.
and
\[ \sigma(\hat{U},\ell,t_1)=J(x^{*},u^{*},t_1). \]
\end{Proposition}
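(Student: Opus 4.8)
The plan is to obtain Proposition \ref{p:2} from Proposition \ref{p:1} by three changes of variable, none of which introduces new analysis: \textbf{(i)} eliminate the auxiliary vector $d$ from the cost \eqref{eq:umin} using Proposition \ref{prop1}; \textbf{(ii)} transport admissible pairs through the time-reversal map $\delta_{t_1}$, which identifies solutions of the adjoint DAE \eqref{eq:zul} with solutions of the dual-control DAE \eqref{p:12:eq1}; and \textbf{(iii)} use the identity $J(x,u,t_1)=\mathscr{I}(\delta_{t_1}(x),\delta_{t_1}(u),t_1)$ noted above. The only real work is to check that these reductions respect the ``iff'' clauses of Proposition \ref{p:1}.

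First I would record the effect of \textbf{(i)}. Given a solution $(z,v)$ of \eqref{eq:zul} with $z\in L^2(0,t_1)$, $F^Tz$ absolutely continuous and $v\in L^2(0,t_1)$, the triples $\{(z,d,v):F^Td=0\}$ form a nonempty slice of $\mathscr{DD}(t_1)$, and by Proposition \ref{prop1} the infimum of $\mathscr{I}(z,\cdot,v,t_1)$ over this slice is \emph{attained} at $d=\LOPT(F^Tz(0))$ and equals $\mathscr{I}(z,v,t_1)$. Hence
\[
  \inf_{(z,d,v)\in\mathscr{DD}(t_1),\,v=U}\mathscr{I}(z,d,v,t_1)\;=\;\inf_{(z,v)\ \mathrm{solves}\ \eqref{eq:zul},\,v=U}\mathscr{I}(z,v,t_1),
\]
and similarly without the constraint $v=U$; moreover a minimizer of $\mathscr{I}$ over $\mathscr{DD}(t_1)$ exists iff a minimizing pair $(z^{*},\widehat U)$ of $\mathscr{I}(\cdot,\cdot,t_1)$ over solutions of \eqref{eq:zul} exists, and then one may take $d^{*}=\LOPT(F^Tz^{*}(0))$.

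For \textbf{(ii)} I would spell out that, since \eqref{p:12:eq1} is \eqref{eq:zul} with time reversed, setting $z=\delta_{t_1}(x)$ and $v=\delta_{t_1}(u)$ flips the sign of $A^T$ and moves the boundary condition from $t=0$ to $t=t_1$, so $(x,u)\in\mathscr{D}_{\ell}(t_1)$ (Notation \ref{sol:not}) iff $(z,v)$ is an admissible pair for \eqref{eq:zul}; as $\delta_{t_1}$ is an involutive $L^2$-isometry preserving absolute continuity, this is a bijection, and the cost transforms as in \textbf{(iii)} (the integral by $s\mapsto t_1-s$, the boundary term because $\delta_{t_1}(x)(0)=x(t_1)$). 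Assembling: existence of $(x,u)\in\mathscr{D}_{\ell}(t_1)$ with $\delta_{t_1}(u)=U$ is equivalent, via \textbf{(ii)} and part (1) of Proposition \ref{p:1}, to $\sigma(U,\ell,t_1)<+\infty$, which gives the first assertion; when $\sigma(U,\ell,t_1)<+\infty$, chaining part (2) of Proposition \ref{p:1}, the reduction \textbf{(i)} and the identity \textbf{(iii)} yields $\sigma(U,\ell,t_1)=\inf_{(x,u)\in\mathscr{D}_{\ell}(t_1),\,\delta_{t_1}(u)=U}J(x,u,t_1)$; and part (3) of Proposition \ref{p:1}, transported back by $\delta_{t_1}$, produces $(x^{*},u^{*})\in\mathscr{D}_{\ell}(t_1)$ minimizing $J(\cdot,\cdot,t_1)$ with $\widehat U=\delta_{t_1}(u^{*})$ and $\sigma(\widehat U,\ell,t_1)=J(x^{*},u^{*},t_1)$.

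The hard part, such as it is, will be bookkeeping with the directions of implication: part (2) of Proposition \ref{p:1} presupposes $\sigma(U,\ell,t_1)<+\infty$, so to prove the first assertion of Proposition \ref{p:2} one must first invoke part (1) (for the time-reversed pair) to secure finiteness and only then apply part (2); and to be able to pick $\widehat U=\delta_{t_1}(u^{*})$ rather than an infimizing sequence one needs the infimum over $d$ in Proposition \ref{prop1} to be attained, which it is. The possible nonexistence of DAE solutions for some initial data is harmless here, since $x_0=\ell$ is fixed and every clause is conditioned on the relevant solution set being nonempty.
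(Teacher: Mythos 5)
Your proposal is correct and follows essentially the same route as the paper: the paper likewise obtains Proposition \ref{p:2} as a direct ``reformulation'' of Proposition \ref{p:1} by (i) eliminating $d$ via Proposition \ref{prop1}, (ii) identifying solutions of \eqref{eq:zul} with elements of $\mathscr{D}_{\ell}(t_1)$ through $\delta_{t_1}$, and (iii) using $J(x,u,t_1)=\mathscr{I}(\delta_{t_1}(x),\delta_{t_1}(u),t_1)$, and it leaves the chaining of the ``iff'' clauses implicit. Your write-up merely makes explicit the bookkeeping (in particular the order in which parts (1) and (2) of Proposition \ref{p:1} must be invoked, and the attainment of the infimum over $d$) that the paper omits.
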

     We are now ready to conclude the proof of the theorem.
     Suppose $(x^{*},u^{*})$ is the solution of the dual control problem.
     Since $(x^{*},u^{*}) \in \mathscr{D}_{\ell}(t_1)$ for all $t_1$, Proposition \ref{p:2}
     yields that 
     $\inf_{(x,u) \in \mathscr{D}_{\ell}(t_1)} J(x,u,t_1) < + \infty$.
     From Proposition \ref{p:2} it follows that
     $\inf_{v \in L^2(0,t_1)} \sigma(v,\ell,t_1)=\inf_{(x,u) \in \mathscr{D}_{\ell}(t_1)} J(x,u,t_1) < +\infty$.
     Let $U_{t_1} \in L^2(0,t_1)$ be such that
     $\sigma(U_{t_1},\ell,t_1)=\inf_{v \in L_2(0,t_1)} \sigma(v,\ell,t_1)$.
     From Proposition \ref{p:1} it follows that such $U_{t_1}$ exists for all $t_1 > 0$.
     Define $\bar{U} \in \mathscr{F}$ as
     $\bar{U}(t_1,s)=U_{t_1}(s)$ for all $t_1 > 0$, $s \in [0,t_1]$.
     It then follows that for any $U \in \mathscr{F}$,
     $\sigma(\bar{U}(t_1,\cdot),\ell,t_1) \le \sigma(U(t_1,\cdot),\ell,t_1)$ and hence
     $\sigma(\bar{U},\ell) = \inf_{U \in \mathscr{F}} \sigma(U,\ell) <+\infty$.
     From Proposition \ref{p:2} it then follows that
     $\sigma(\bar{U}(t_1,\cdot),\ell,t_1)=\inf_{v \in L^2(0,t_1)} \sigma(v,\ell,t_1)$ and
     thus
     \[ \sigma(\bar{U},\ell) = \limsup_{t_1 \rightarrow \infty} \inf_{(x,u) \in \mathscr{D}(t_1)} J(x,u,t_1).
     \]
    Define now $\widehat{U} \in \mathscr{F}$ as
    $\widehat{U}(t_1,s)=\delta_{t_1}(u^{*})$, $t_1 > 0$. Then
    $\sigma(\bar{U}(t_1,\cdot),\ell,t_1) \le \sigma(\widehat{U}(t_1,\cdot),\ell,t_1)=\inf_{(x,u^{*}) \in \mathscr{D}_{\ell}(t_1)} J(x,u^{*},t_1) \le J(x^{*},u^{*},t_1)$ and hence
    \[
     \begin{split}
     & \sigma(\bar{U},\ell) \le \sigma(\widehat{U},\ell) \le \limsup_{t_1 \rightarrow \infty} J(x^{*},u^{*},t_1) =\\
     & \limsup_{t_1 \rightarrow \infty} \inf_{(x,u) \in \mathscr{D}_{\ell}(t_1)} J(z,u,t_1) = \sigma(\bar{U},\ell).
     \end{split}
   \]
  and therefore $\hat{U}$ satisfies
  \eqref{problem:obs:eq2}.

  Consider now the dynamical controller $(A_c,B_c,C_u,C_x)$ which is the solution of the
  dual optimal control problem.
  Then $u^{*}(s)=C_{u}e^{A_cs}B_cF^T\ell$ and thus
  $O_{\hat{U}}(y)(t_1)=\int_0^{t_1} \hat{U}^T(t_1,s)y(s)=\int_0^{t_1} \ell^TFB^T_ce^{A^T_c(t_1-s)}C^T_u y(s)ds$. The latter is the output of the linear system $(A^T_c,C_u^T,\ell^TFB^T_c)$ for the
  input $y$ and  the zero initial condition.

  Finally, assume that $y \in L^2_{loc}([0,+\infty), \mathbb{R}^p)$ is the output of the DAE \eqref{eq:dae_output} for the state trajectory $x$ and $f=0$ and $n=0$. Let $(x^{*},u^{*})$ be the solution to the
  dual control problem.
  Consider the derivative of  $r(t)=x^T(t)F^{T}x^{*}(t_1-t)=x^T(t)F^T({F^T}^{+})F^Tx^{*}(t_1-t)$, $t \in [0,t_1]$.
  It follows that
  \[
     \begin{split}
     & \dot r(t) = x^T(t)A^Tx^{*}(t_1-t)-x^T(t)A^T(t_1-t)x^{*}(t_1-t)+\\
     & +x^T(t)Hu^{*}(t_1-t)= {u^{*}}^T(t_1-t)y(t)
     \end{split}
  \]
   and hence
  \[ O_{\widehat{U}}(y)(t_1)=\int_0^{t_1} \dot r(s)ds = x^T(0)F^Tx^{*}(t_1)-x^T(t_1)F^{T}x^{*}(0). \]
  By noticing that $x^{*}(0)={\ell}^T$, it follows that 
  \[ (\ell^TFx(t_1) - O_{\widehat{U}(y)}(t_1)) = x^T(0)F^Tx^{*}(t_1). \]
  Since $F^Tx^{*}(t_1)$ converges to zero as $t_1 \rightarrow \infty$, then the estimation
  error will also converge to zero.


\end{proof}


\subsection{DAE systems as solutions to the output zeroing problem}
\label{dae:geo}
 Consider the DAE system \eqref{dae:sys}.
 In this section we will study solution set $\mathscr{D}_{x_0}(t_1)$,
 $t_1\in [0,+\infty]$ of \eqref{dae:sys}.
 It is well known that for any fixed $x_0$ and
 $u$, \eqref{dae:sys} may have several solutions or no solution at all.
 In the sequel, we will use the tools of geometric control theory to find a
 subset $\mathcal{X}$ of $\mathbb{R}^{n}$, such that for any $x_0 \in E^{-1}(\mathcal{X})$, $\mathcal{D}_{x_0}(t_1) \ne \emptyset$ for all $t_1 \in [0,+\infty]$.
 Furthermore, we provide a complete characterization of all such solutions as outputs of an LTI  system.
\begin{Theorem}
\label{dae2lin:theo}
 Consider the DAE system \eqref{dae:sys}.
 There exists a linear system $\mathscr{S}=(A_l,B_l,C_l,D_l)$ with
 $A_l \in \mathbb{R}^{\hat{n} \times \hat{n}}$,
 $B_l \in \mathbb{R}^{\hat{n} \times k}$, $C_l \in \mathbb{R}^{(n+m) \times \hat{n}}$
 and $D_l \in \mathbb{R}^{(n+m) \times k}$, $\hat{n} \le n$,  and a linear subspace
 $\mathcal{X} \subseteq \mathbb{R}^n$
 such that the following holds.
 \begin{itemize}
 \item $\Rank D_l=k$.
 \item Consider the partitioning $C_l=\begin{bmatrix} C_s^T, & C_{inp}^T \end{bmatrix}^T$,
       $D_l=\begin{bmatrix} D_{s}^T, & D_{inp}^T \end{bmatrix}^T$,
       $C_s \in \mathbb{R}^{n \times \hat{n}}$,
       $C_{inp} \in \mathbb{R}^{m \times \hat{n}}$,
       $D_{s} \in \mathbb{R}^{n \times k}$,
       $D_{inp} \in \mathbb{R}^{m \times k}$.
       Then $ED_s=0$, $\Rank EC_s = \hat{n}$, $\mathcal{X}=\IM EC_s$.
 \item For any $t_1 \in [0,+\infty]$,
       \[ \mathcal{D}_{x_0}(t_1) \ne \emptyset \iff Ex_0 \in \mathcal{X}. \]
 \item
         Define the map
       \( \LMAP = (EC_{s})^{+}:\mathcal{X} \rightarrow \mathbb{R}^{\hat{n}}  \).
       Then $(x,u) \in \mathcal{D}_{x_0}(t_1)$ for some $t_1 \in [0,+\infty]$
        if and only if there exists
       some input $g \in L^2(I,\mathbb{R}^{k})$, $I=[0,t_1] \cap [0,+\infty)$, such
       that
      \[
        \begin{split}
         & \dot v = A_lv+B_lg \mbox{ and } v(0)=\LMAP(Ex_0) \\
         & x = C_sv + D_sg, \\
         & u = C_{inp}+D_{inp}g,
       \end{split}.
       \]
      Moreover, in this case, the state trajectories $x$ and $v$
      are related as $\LMAP(Ex)=v$.
 \end{itemize}
\end{Theorem}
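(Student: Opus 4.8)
The plan is to recast the DAE \eqref{dae:sys} as an \emph{output‑zeroing problem} for an auxiliary LTI and then apply linear geometric control theory \cite{TrentelmanBook,GeomBook2}. I would introduce the auxiliary variable $\xi:=Ex$ and consider the LTI $\Sigma_{aug}=(0,\,[\hat A\ \hat B],\,I_n,\,[-E\ 0])$, with state $\xi\in\mathbb R^n$, input $(x,u)\in\mathbb R^n\times\mathbb R^m$ and output $w\in\mathbb R^n$ defined by
\[ \dot\xi=\hat Ax+\hat Bu,\qquad w=\xi-Ex. \]
The functional class fixed in Notation \ref{sol:not} ($Ex$ absolutely continuous, $x,u\in L^2_{loc}$) is exactly the natural regularity class for $\Sigma_{aug}$ (absolutely continuous state, $L^2_{loc}$ input), so that $(x,u)\in\mathscr D_{x_0}(t_1)$ if and only if $(Ex,x,u,0)$ is a trajectory of $\Sigma_{aug}$ on $I$ with identically zero output and $\xi(0)=Ex_0$.

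Next I would invoke the geometric theory of output zeroing for $\Sigma_{aug}$. Let $\V^{*}\subseteq\mathbb R^n$ be its largest output‑zeroing (weakly unobservable) subspace and $(G_x,G_u)$ a friend, i.e. $(\hat AG_x+\hat BG_u)\V^{*}\subseteq\V^{*}$ and $(I_n-EG_x)\V^{*}=\{0\}$, the latter being exactly the output‑zeroing condition for $\Sigma_{aug}$. Put $\mathcal N:=\{(w_x,w_u)\in\mathbb R^n\times\mathbb R^m:\ Ew_x=0,\ \hat Aw_x+\hat Bw_u\in\V^{*}\}$ and $k:=\dim\mathcal N$. The standard characterization of output‑nulling trajectories then says: such a trajectory with $\xi(0)=\xi_0$ exists iff $\xi_0\in\V^{*}$, and every one of them has $x=G_x\xi+w_x$, $u=G_u\xi+w_u$ for some $(w_x,w_u)\in L^2_{loc}(I,\mathbb R^n\times\mathbb R^m)$ valued in $\mathcal N$ a.e., with $\xi(t)\in\V^{*}$ for all $t\in I$.

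I would then read off $\mathscr S$. Set $\mathcal X:=\V^{*}$, $\hat n:=\dim\V^{*}\le n$, pick $M\in\mathbb R^{n\times\hat n}$ of full column rank with $\IM M=\V^{*}$, and fix a linear isomorphism $D_l=[D_s^T\ D_{inp}^T]^T\colon\mathbb R^k\to\mathbb R^n\times\mathbb R^m$ with image $\mathcal N$, so $\Rank D_l=k$ and $ED_s=0$. Put $C_s:=G_xM$, $C_{inp}:=G_uM$; from $(I_n-EG_x)M=0$ one gets $EC_s=M$, hence $\Rank EC_s=\hat n$, $\IM EC_s=\mathcal X$, and $\LMAP=(EC_s)^+=M^+$. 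Substituting $\xi=Mv$ and $(w_x,w_u)=(D_sg,D_{inp}g)$ into the characterization above gives $x=C_sv+D_sg$, $u=C_{inp}v+D_{inp}g$ and $\LMAP(Ex)=v$; moreover $\hat AC_sv+\hat BC_{inp}v$ and $\hat AD_sg+\hat BD_{inp}g$ both lie in $\V^{*}=\IM M$, so projecting $M\dot v=\dot\xi$ by $M^+$ yields $\dot v=A_lv+B_lg$ with $A_l=M^+(\hat AC_s+\hat BC_{inp})$, $B_l=M^+(\hat AD_s+\hat BD_{inp})$ and $v(0)=\LMAP(Ex_0)$. Conversely, any solution of this ODE with $v(0)=\LMAP(Ex_0)$ — which requires $Ex_0\in\mathcal X$ — produces an element of $\mathscr D_{x_0}(t_1)$ through the same formulas; in particular the choice $g\equiv0$ shows $\mathscr D_{x_0}(t_1)\ne\emptyset$ for every $t_1\in[0,+\infty]$ as soon as $Ex_0\in\mathcal X$, while $\mathscr D_{x_0}(t_1)=\emptyset$ otherwise since $\xi(0)=Ex_0$ must belong to $\V^{*}$. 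This gives all the listed properties.

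The partition‑matching identities ($ED_s=0$, $\Rank EC_s=\hat n$, $\mathcal X=\IM EC_s$, $\LMAP(Ex)=v$ along solutions) drop out of the construction. The step I expect to be the main obstacle is the \emph{completeness} of the output‑zeroing characterization in the prescribed $L^2_{loc}$/absolutely‑continuous class and on the half‑line $[0,+\infty)$: one must show that every output‑nulling input of $\Sigma_{aug}$ is exactly of the form $x=G_x\xi+w_x$, $u=G_u\xi+w_u$ with $(w_x,w_u)$ valued in $\mathcal N$, so that no DAE solution is lost. This relies on the $(A_{aug},B_{aug})$‑invariance together with the output‑zeroing property of $\V^{*}$ and a pointwise dimension count, and on the fact that $\V^{*}$ and the friend $(G_x,G_u)$ do not depend on the interval, which lets the compact‑interval argument pass to the infinite horizon.
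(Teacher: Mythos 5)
Your construction is correct and proves all the asserted properties, but it reaches them through a different auxiliary LTI than the paper. The paper first normalizes $E$ by nonsingular $S,T$ with $SET=\mathrm{diag}(I_r,0)$, $r=\Rank E$, splits $T^{-1}x$ into a differential part $p\in\mathbb{R}^r$ (the LTI state) and an algebraic part $q_1$ (absorbed into the input together with $u$), and identifies DAE trajectories with the output-nulling trajectories of $\dot p=\widetilde Ap+Gq$, $z=\widetilde Cp+\widetilde Dq$; it then applies the weakly-unobservable-subspace machinery ($\V(\LIN)$, a friend $\widetilde F$, and $L$ with $\IM L=\ker\widetilde D\cap G^{-1}(\V)$) to that $r$-dimensional system. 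You instead take $\xi=Ex$ as the state of a drift-free system with the whole pair $(x,u)$ as input and output $\xi-Ex$, and apply the same machinery ($\V^{*}$, the friend $(G_x,G_u)$, and $\mathcal N=\ker\begin{bmatrix}-E & 0\end{bmatrix}\cap\begin{bmatrix}\hat A & \hat B\end{bmatrix}^{-1}(\V^{*})$) there; the identity $EG_x|_{\V^{*}}=I$ then hands you $EC_s=M$, $ED_s=0$ and $\LMAP=M^{+}$ with no coordinate bookkeeping. Both routes rest on the identical key lemma, namely Theorem~\ref{theo:geo_contr}, the $L^2_{loc}$/half-line extension of the output-zeroing characterization of \cite{TrentelmanBook}; the ``main obstacle'' you flag is precisely the step the paper also asserts as an ``easy extension'' without detailed proof, so you are not worse off there. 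What your version buys is coordinate-freeness (no choice of $S,T$; the non-uniqueness handled in Lemma~\ref{lemm:uniq} collapses to the choice of the basis $M$, the friend, and a basis of $\mathcal N$); what the paper's version buys is a smaller auxiliary LTI (state dimension $r$ rather than $n$) and explicit block formulas that are reused verbatim in the appendix proof of feedback equivalence.
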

\begin{proof}[Proof of Theorem \ref{dae2lin:theo}]
 There exist suitable nonsingular matrices $S$ and $T$ such that
 \begin{equation}
 \label{dae:sys:tr}
     SET = \begin{bmatrix} I_r & 0 \\
                          0   & 0
          \end{bmatrix},
 \end{equation}
where $r = \Rank E$. Let
 \[  S\hat{A}T=\begin{bmatrix} \widetilde{A} & A_{12} \\
                    A_{21} & A_{22}
     \end{bmatrix} \mbox{,\ \ }
     S\hat{B}=\begin{bmatrix} B_1 \\ B_2 \end{bmatrix}
 \]
 be the decomposition of $A,B$ such that $A_{11} \in \mathbb{R}^{r \times r}$,
 $B_{11} \in \mathbb{R}^{r \times m}$.
 Define
 \[
   \begin{split}
    & G = \begin{bmatrix} A_{12}, & B_1 \end{bmatrix}  \mbox{, \ \ }
     \widetilde{D} = \begin{bmatrix} A_{22}, & B_2 \end{bmatrix}  \mbox{ and }
     \widetilde{C}=A_{21}
   \end{split}.
 \]
  Consider the following linear system
 \begin{equation}
 \label{dae:sys_lin}
    \LIN \left\{ \begin{split}
     & \dot p = \widetilde{A}p+Gq  \\
     &  z     = \widetilde{C}p + \widetilde{D}q
    \end{split}\right..
 \end{equation}
 The trajectories $(x,u)$ of the DAE \eqref{dae:sys} are exactly those
 trajectories $(p,q)$, $T^{-1}x=(p^T,q_1^T)^T$, $q=(q_1^T,u^T)^T$, $q_1 \in \mathbb{R}^{n-r}$,
  of the linear system
 \eqref{dae:sys_lin} for which the output $z$ is zero.

 Recall from \cite[Section 7.3]{TrentelmanBook} the problem of making the output zero
 by choosing a suitable input.
 Recall from \cite[Definition 7.8]{TrentelmanBook}
 the concept
 of a weakly observable subspace of a linear system. If we apply this concept to
 $\LIN$, then an initial state $p(0) \in \mathbb{R}^{r}$ of
 $\LIN$ is \emph{weakly observable}, if there exists an input function $q \in L^2([0,+\infty), \mathbb{R}^k)$
  such that the resulting output function $z$ of $\LIN(\Sigma)$ equal zero, i.e. $z(t)=0$ for
 all $t \in [0,+\infty)$.
 Following the convention of \cite{TrentelmanBook}, let us denote the set of all
 weakly observable initial states by $\V(\LIN)$. As it was remarked in
 \cite[Section 7.3]{TrentelmanBook}, $\V(\LIN)$ is a vector space and in fact
 it can be computed.   Moreover, if $p(0)$ in $\V(\LIN)$ and for the
 particular choice of $q$, $z=0$, then $p(t) \in \V(\LIN)$ for all
 $t \ge 0$.

 Let $I=[0,t]$ or $I=[0,+\infty)$.
 Let $q \in L^2(I,\mathbb{R}^{n-r+m})$ and let $p_0 \in \mathbb{R}^{r}$.
 Denote by $p(p_0,q)$ and $z(p_0,q)$ the state and output trajectory of
 \eqref{dae:sys_lin} which corresponds to the initial state $p_0$ and input $q$.
 For technical purposes we will need the following easy extension of \cite[Theorem 7.10--.11]{TrentelmanBook}.
\begin{Theorem} 
\label{theo:geo_contr}
\begin{enumerate}
\item
 \label{theo:geo_contr:part2}
$\V=\V(\LIN)$ is the largest subspace of $\mathbb{R}^{r}$ for which
      there exists a linear map
      $\widetilde{F}:\mathbb{R}^r \rightarrow \mathbb{R}^{m+n-r}$ such that
      \begin{equation}
      \label{theo:geo_contr:eq1}
        (\widetilde{A}+G\widetilde{F})\V \subseteq \V \mbox{ and }
        (\widetilde{C}+\widetilde{D}\widetilde{F})\V = 0
      \end{equation}
\item
 \label{theo:geo_contr:part3}
     Let $\widetilde{F}$ be a map such that \eqref{theo:geo_contr:eq1} holds for
      $\V=\V(\LIN)$. Let
      $L \in \mathbb{R}^{(m+n-r) \times k}$ for some $k$
      be a matrix such that $\IM L = \ker \widetilde{D} \cap G^{-1}(\V(\LIN))$
      and $\Rank L=k$.

      For any interval $I=[0,t]$ or $I=[0,+\infty)$,
      and for any $p_0 \in \mathbb{R}^r$, $q \in L^2_{loc}(I,\mathbb{R}^{k})$,
      \[ z(p_0,q)(t)=0  \mbox{ for } t \in I \mbox{ a.e.} \]
       if and only if $p_0 \in \V$ and there exists $w \in L^2_{loc}(I,\mathbb{R}^{n-r+m})$
       such that
      \[ q(t)=\widetilde{F}p(p_0,q)(t)+Lw(t) \mbox{ for } t \in I \mbox{ a.e.} \]
\end{enumerate}
\end{Theorem}
 We are ready now to finalize the proof of Theorem \ref{dae2lin:theo}.
 The desired linear system $\mathscr{S}=(A_l,B_l,C_l,D_l)$ is now obtained as follows.
 Consider the linear system below.
\[
 \begin{split}
  & \dot p =(\widetilde{A}+G\widetilde{F})p+GLw \\
 & (x^T,u^T)^T = \bar{C}p+\bar{D}w \\
 & \bar{C} = \begin{bmatrix} T & 0 \\ 0 & I_m \end{bmatrix} \begin{bmatrix} I_r \\ \widetilde{F} \end{bmatrix} \mbox{ and }
  \bar{D} = \begin{bmatrix} T & 0 \\ 0 & I_m \end{bmatrix}\begin{bmatrix} 0 \\ L \end{bmatrix}.
 \end{split}
\]
 Choose a basis of $\V=\V(\LIN)$ and choose $(A_l,B_l,C_l,D_l)$ as follows:
 $D_l=\bar{D}$, and let $A_l,B_l,C_l$ be the matrix representations in this basis of
 the linear maps $(\widetilde{A}+G\widetilde{F}):\V \rightarrow \V$, $GL:\mathbb{R}^k \rightarrow \V$, and
 $\bar{C}:\V \rightarrow \mathbb{R}^{n+m}$ respectively.
 Define
 \[ \mathcal{X}=\{ S^{-1}\begin{bmatrix} p \\ 0 \end{bmatrix} \mid p \in \V\}. \]
 It is easy to see that this choice of $(A_l,B_l,C_l,D_l)$ and $\mathcal{X}$
 satisfies the conditions of the theorem.

\end{proof}
\begin{Remark}[Regular case]
 The well-known case when \eqref{dae:sys} is regular, i.e. when
 $\det(sE-\hat{A}) \ne 0$ has the following interpretation.
 In this case the linear system $\LIN$ from the proof of
 Theorem \ref{dae2lin:theo} is left invertible, and $\V(\LIN)=\mathbb{R}^r$.
\end{Remark}
 The proof of Theorem \ref{dae2lin:theo} is constructive and yields
 an algorithm for computing $(A_l,B_l,C_l,D_l)$ from $(E,\hat{A},\hat{B})$.
 This prompts us to introduce the following terminology.
\begin{Definition}
\label{linassc:def}
 A linear system $\mathscr{S}=(A_l,B_l,C_l,D_l)$ described in the proof of
 Theorem \ref{dae2lin:theo} is called
 the linear system associated with the DAE \eqref{dae:sys}.
\end{Definition}
 Note that the linear system associated  with $(E,\hat{A},\hat{B})$ is not unique.
 There are two sources of non-uniqueness:
 \begin{enumerate}
 \item The choice of the matrices $S$ and $T$ in \eqref{dae:sys:tr}.
 \item The choice of $\widetilde{F}$ and $L$ in Theorem \ref{theo:geo_contr}.
 \end{enumerate}
 However, we can show that all associated linear systems are feedback equivalent.
 \begin{Definition}[Feedback equivalence]
  Two linear systems $\mathscr{S}_i=(A_i,B_i,C_i,D_i)$, $i=1,2$ and
  are said to be
  \emph{feedback equivalent}, if there exist a linear state feedback
  matrix $K$ and a non-singular square matrix $U$ such that
  $(A_1+B_1K,B_1U,C_1+D_1K,D_1U)$ and $\mathscr{S}_2$ are algebraically simillar.
 \end{Definition}
 \begin{Lemma}
 \label{lemm:uniq}
  Let $\mathscr{S}_i=(A_i,B_i,C_i,D_i)$, $i=1,2$ be
  two linear systems which are obtained from the proof of Theorem \ref{dae2lin:theo}.
 Then $\mathscr{S}_1$ and $\mathscr{S}_2$ are feedback equivalent.
 \end{Lemma}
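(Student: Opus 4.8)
The lemma has exactly two sources of non-uniqueness, as noted above: the choice of $S,T$ in \eqref{dae:sys:tr}, and the choice of a friend $\widetilde F$ (a matrix satisfying \eqref{theo:geo_contr:eq1}) and of the matrix $L$ in Theorem \ref{theo:geo_contr}. Feedback equivalence is readily checked to be an equivalence relation, so it suffices to treat the two sources one at a time: first keep $S,T$ fixed and vary $(\widetilde F,L)$, then vary $S,T$ and, using the first part, pick any convenient $(\widetilde F,L)$ for each.

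\textbf{Step 1: fixed $S,T$.} Here $\widetilde A,G,\widetilde C,\widetilde D$ and $\V=\V(\LIN)$ coincide for the two systems. If $\widetilde F_1,\widetilde F_2$ both satisfy \eqref{theo:geo_contr:eq1}, subtracting the two instances gives $\widetilde D(\widetilde F_1-\widetilde F_2)\V=0$ and $G(\widetilde F_1-\widetilde F_2)\V\subseteq\V$, i.e. $(\widetilde F_1-\widetilde F_2)\V\subseteq\ker\widetilde D\cap G^{-1}(\V)=\IM L_1$, so $(\widetilde F_1-\widetilde F_2)|_\V=L_1K_0$ for some linear $K_0:\V\to\mathbb R^{k}$. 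Inspecting the formulas for $(A_l,B_l,C_l,D_l)$, $\bar C$, $\bar D$ in the proof of Theorem \ref{dae2lin:theo}, passing from $\widetilde F_1$ to $\widetilde F_2$ shifts $(\widetilde A+G\widetilde F)|_\V$ by $-(GL_1)K_0|_\V$ and $\bar C|_\V$ by $-\bar D K_0|_\V$ while leaving $GL_1$ and $\bar D$ untouched; in coordinates this is exactly a state feedback $(A_l,B_l,C_l,D_l)\mapsto(A_l+B_lK,B_l,C_l+D_lK,D_l)$ with $K$ the matrix of $-K_0|_\V$. If instead $L_1,L_2$ are two matrices of full column rank $k$ with the same image $\ker\widetilde D\cap G^{-1}(\V)$, then $L_2=L_1U$ for a unique nonsingular $U$, which sends $(A_l,B_l,C_l,D_l)$ to $(A_l,B_lU,C_l,D_lU)$. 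Composing the two, all systems built from a fixed $S,T$ are feedback equivalent.

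\textbf{Step 2: varying $S,T$.} Fix choices $(S_i,T_i)$, $i=1,2$, and let $\mathscr{S}_i=(A_i,B_i,C_i,D_i)$ be a corresponding system from the proof of Theorem \ref{dae2lin:theo}, with associated $\mathcal X_i$, $\hat n_i$, $C_{s,i},C_{inp,i},D_{s,i},D_{inp,i}$. Both $\mathscr{S}_i$ describe the same solution family $\mathscr D_{x_0}(t_1)$, since that family depends only on $(E,\hat A,\hat B)$. From $\mathscr D_{x_0}(t_1)\ne\emptyset\iff Ex_0\in\mathcal X_i$ (for both $i$, all $x_0$) and $\mathcal X_i\subseteq\IM E$ we get $\mathcal X_1=\mathcal X_2=:\mathcal X$; since $\Rank EC_{s,i}=\hat n_i$ and $\mathcal X=\IM EC_{s,i}$ we also get $\hat n_1=\hat n_2=:\hat n=\dim\mathcal X$, with $EC_{s,i}:\mathbb R^{\hat n}\to\mathcal X$ a bijection whose inverse is $\LMAP_i=(EC_{s,i})^{+}$ on $\mathcal X$. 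Hence $Z:=(EC_{s,2})^{+}EC_{s,1}$ is nonsingular, and along any common trajectory, using $ED_{s,1}=0$ so that $Ex=EC_{s,1}v_1$ together with the identity $\LMAP_i(Ex)=v_i$ from Theorem \ref{dae2lin:theo}, we obtain $v_2=Zv_1$ for the corresponding internal states $v_i$ of $\mathscr{S}_i$. A short argument (fix $x_0$ with $Ex_0=0$, take $g_1$ constant, and let $t\to 0$ in $D_2g_2(t)=D_1g_1(t)+(C_1-C_2Z)v_1(t)$, where $v_i(0)=0$) shows $\IM D_1=\IM D_2$, hence $k_1=k_2$ and $N:=D_2^{+}D_1$ is nonsingular. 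Substituting $v_2=Zv_1$ into $(x^T,u^T)^T=C_iv_i+D_ig_i$ yields $g_2=Ng_1+\widehat{M} v_1$ with $\widehat{M}=D_2^{+}(C_1-C_2Z)$; feeding $v_2=Zv_1$ and $g_2=Ng_1+\widehat{M} v_1$ into $\dot v_2=A_2v_2+B_2g_2$ and $(x^T,u^T)^T=C_2v_2+D_2g_2$ and matching the coefficients of $v_1$ and $g_1$ over all trajectories gives $A_2=Z(A_1+B_1K)Z^{-1}$, $B_2=ZB_1U$, $C_2=(C_1+D_1K)Z^{-1}$, $D_2=D_1U$ with $K=-N^{-1}\widehat{M}$ and $U=N^{-1}$, i.e. $\mathscr{S}_1$ and $\mathscr{S}_2$ are feedback equivalent. (A purely algebraic alternative: with $P=S_2S_1^{-1}$ and $W=T_1^{-1}T_2$, the identities $S_iET_i=\begin{bmatrix}I_r&0\\0&0\end{bmatrix}$ force $P$ and $W$ to be block triangular with mutually inverse leading blocks, and one checks directly that this distortion carries $\LIN$, its weakly observable subspace and its friends, and hence the constructed $(A_l,B_l,C_l,D_l)$, into feedback-equivalent data.)

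I expect Step 2 to be the main obstacle. The delicate points there are proving $\IM D_1=\IM D_2$ — equivalently $k_1=k_2$ and $N$ invertible — which needs the small asymptotic argument above because the inputs $g_i$ are only $L^2$ and carry no pointwise values, and, if one follows the algebraic route instead, verifying that the block-triangular distortion genuinely preserves $\V(\LIN)$ and sends friends to friends. Once these are settled, matching coefficients over enough trajectories (first $g\equiv 0$ with arbitrary $v_i(0)$, read off at $t=0$, to fix the ``$A$'' and ``$C$'' relations, then $v_i(0)=0$ with smooth $g$ to fix the ``$B$'' and ``$D$'' relations) is routine, and Step 1 is elementary.
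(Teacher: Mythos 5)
Your proof is correct, but it takes a genuinely different route from the paper's. The paper works entirely at the matrix level: it writes $T_2^{-1}T_1$ and $S_2S_1^{-1}$ in block-triangular form with equal leading blocks $R_{11}=H_{11}$, observes that the two intermediate systems $\LIN_1,\LIN_2$ are then related by a feedback equivalence \emph{with output injection} $(R_{11},\hat F,\hat G,\hat U,\hat V)$, deduces $\V_2=R_{11}\V_1$ from a cited exercise in Trentelman et al., and finally writes down explicit $(T,F,U)$ (with $T=R_{11}$, $U=L_1^{+}\hat U L_2$, $F=L_1^{+}(\hat F+\hat U F_2R_{11}-F_1)$) and verifies six identities one by one; this is essentially the ``purely algebraic alternative'' you sketch in parentheses, and it handles both sources of non-uniqueness in one computation. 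Your main argument is instead behavioral: Step 1 disposes of the choice of $(\widetilde F,L)$ by the elementary observation that two friends differ on $\V$ by something factoring through $L$, and Step 2 exploits the already-proven content of Theorem \ref{dae2lin:theo} — that both associated systems parametrize the same trajectory set $\mathscr{D}_{x_0}(t_1)$, that $\Rank D_l=k$ makes the parametrization injective, and that $\LMAP(Ex)=v$ pins down the state correspondence $v_2=Zv_1$ — to read the feedback equivalence off the induced bijection of trajectories. What your route buys is brevity and conceptual transparency (feedback equivalence appears as the automorphism group of an injectively parametrized behavior, and no block-matrix bookkeeping is needed); what it costs is a dependence on the full ``if and only if'' strength of Theorem \ref{dae2lin:theo} plus the small analytic lemma ($\IM D_1=\IM D_2$ via continuity of $v_1$ and closedness of $\IM D_2$) needed because $g$ has no pointwise values, whereas the paper's computation is self-contained and also exhibits the equivalence matrices explicitly in terms of $S_i,T_i,F_i,L_i$, which is what the Remark on uniqueness of the associated LQ problem implicitly uses.
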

 The proof of Lemma \ref{lemm:uniq} can be found in the appendix.

\subsection{Solution of the optimal control problem for DAE}
\label{dae:opt}
We apply Theorem~\ref{dae2lin:theo} in order to solve a control problem defined in
 Problem \ref{opt:contr:def}.
  Let $\mathscr{S} = (A_l,B_l,C_l,D_l)$ be a linear system associated with
  $\Sigma$ and let $\LMAP$ be the map described in Theorem \ref{dae2lin:theo} and let
  $C_s$ the component of $C_l$ as defined in Theorem \ref{dae2lin:theo}.
  Consider the following linear quadratic control problem.
  For every initial state $v_0$, for every interval $I$ containing $[0,t_1]$ and for every
  $g \in L^2_{loc}(I,\mathbb{R}^{k})$ define the cost functional $J(v_0,g,t)$
  \[
    \begin{split}
     & \mathscr{J}(v_0,g,t_1) =  v^T(t_1)E^TC_s^T Q_0 EC_{s} v(t_1) + \\
     & +\int_0^{t_1} \nu^T(t)\begin{bmatrix} Q & 0 \\ 0 & R \end{bmatrix} \nu(t)dt    \\
     & \dot v = A_lv + B_l g  \mbox{ and }  v(0)=v_0 \\
     & \nu = C_lv+D_lg .
    \end{split}
   \]
 For  any $g \in L^2_{loc}([0,+\infty), \mathbb{R}^{k})$ and $v_0 \in \mathbb{R}^{\hat{n}}$, define
 \[ \mathscr{J}(v_0,g)=\limsup_{t_1 \rightarrow \infty} \mathscr{J}(v_0,g,t_1). \]
 Consider the control problem of finding for every initial state $v_0$ an input
  $g^{*} \in L^2_{loc}(\mathbb{R}^k)$ such that
 \begin{equation}
 \label{opt_final}
 \begin{split}
  &  \mathscr{J}(v_0,g^{*}) = \limsup_{t_1 \rightarrow \infty} \inf_{g \in L^2(0,t_1)} \mathscr{J}(v_0,g,t_1).
 \end{split}
 \end{equation}
 \begin{Definition}[Associated LQ problem]
 \label{def:class:lq}
  The control problem \eqref{opt_final} is called an \emph{LQ problem
  associated} with  $\mathcal{C}(E,\hat{A},\hat{B},Q,R,Q_0)$
  and it is denoted by
  $\mathcal{CL}(A_l,B_l,C_l,D_l)$.
 \end{Definition}
 \begin{Remark}[Uniqueness]
  Note the solution of an associated LQ does not depend on the choice of
  $\mathscr{S}$: for any two choices of $\mathscr{S}$, the corresponding
  solutions can be transformed to each other by a linear state feedback
  and linear coordinate changes of the input- and state-space.
 \end{Remark}
  The relationship between the associated LQ problem and the
  original control problem for DAEs is as follows.
 \begin{Theorem}
 \label{lqg:eq}
  Let $g^{*} \in L^2_{loc}([0,+\infty), \mathbb{R}^{k})$ and let $(x^{*},u^{*})$
  be the corresponding output of $\mathscr{S}=(A_l,B_l,C_l,D_l)$ from the initial state
  $v_0 =\LMAP(Ex_0)$ for some $x_0 \in \mathbb{R}^n$, $\mathscr{D}_{x_0}(\infty) \ne \emptyset$.
   Then $(x^{*},u^{*}) \in \mathscr{D}_{x_0}(\infty)$ and 
   $g^{*}$ is a solution of $\mathcal{CL}(A_l,B_l,C_l,D_l)$ for $v_0$  if and only if
  \begin{equation*}
  \label{lqg:eq1} 
    J(x^{*},u^{*})=\limsup_{t_1 \rightarrow \infty} \inf_{(x,u) \in \mathscr{D}_{x_0}(t_1)} J(x,u,t). 
  \end{equation*}
 \end{Theorem}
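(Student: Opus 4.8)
The plan is to use the bijection of Theorem~\ref{dae2lin:theo} to transport both the running/terminal cost and the admissible solution set of the DAE control problem onto the associated linear system $\mathscr{S}$, after which the defining condition \eqref{opt_final} of a solution of $\mathcal{CL}(A_l,B_l,C_l,D_l)$ becomes word-for-word the condition in the statement of Theorem~\ref{lqg:eq}. First I would dispose of the structural claims: since $\mathscr{D}_{x_0}(\infty)\neq\emptyset$, Theorem~\ref{dae2lin:theo} gives $Ex_0\in\mathcal{X}$, so $v_0=\LMAP(Ex_0)$ is well defined and $\mathscr{D}_{x_0}(t_1)\neq\emptyset$ for every $t_1\in[0,+\infty]$; applying the last item of Theorem~\ref{dae2lin:theo} on $I=[0,+\infty)$ to $g^{*}$ yields $(x^{*},u^{*})\in\mathscr{D}_{x_0}(\infty)$. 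The same item, read on $I=[0,t_1]$, says that for each finite $t_1$ the assignment $g\mapsto(x,u)$, where $(x,u)$ is the output of $\mathscr{S}$ from the fixed initial state $v_0$ driven by $g\in L^2(0,t_1)$, is a surjection onto $\mathscr{D}_{x_0}(t_1)$ (injectivity is not needed).

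Next I would check the identity $\mathscr{J}(v_0,g,t_1)=J(x,u,t_1)$ for every finite $t_1$, every $g\in L^2(0,t_1)$, and the corresponding output $(x,u)$ of $\mathscr{S}$ from $v_0$. The integral parts coincide because the block partition of $C_l,D_l$ gives $\nu=C_lv+D_lg=(x^{T},u^{T})^{T}$, hence $\nu^{T}\begin{bmatrix}Q&0\\0&R\end{bmatrix}\nu=x^{T}Qx+u^{T}Ru$; the terminal parts coincide because $ED_s=0$ (Theorem~\ref{dae2lin:theo}) forces $EC_sv(t_1)=E(C_sv+D_sg)(t_1)=Ex(t_1)$, so $v^{T}(t_1)E^{T}C_s^{T}Q_0EC_sv(t_1)=x^{T}(t_1)E^{T}Q_0Ex(t_1)$. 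In particular $\mathscr{J}(v_0,g,t_1)$ depends on $g$ only through $(x,u)$.

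Combining these, for each finite $t_1$ the surjectivity together with the cost identity gives $\inf_{g\in L^2(0,t_1)}\mathscr{J}(v_0,g,t_1)=\inf_{(x,u)\in\mathscr{D}_{x_0}(t_1)}J(x,u,t_1)$; taking $\limsup_{t_1\to\infty}$ preserves the equality, while applying the cost identity to $g^{*}$ and then taking $\limsup$ gives $\mathscr{J}(v_0,g^{*})=J(x^{*},u^{*})$. Substituting both equalities into \eqref{opt_final} shows that $g^{*}$ solves $\mathcal{CL}(A_l,B_l,C_l,D_l)$ for $v_0$ if and only if $J(x^{*},u^{*})=\limsup_{t_1\to\infty}\inf_{(x,u)\in\mathscr{D}_{x_0}(t_1)}J(x,u,t_1)$, which is the assertion. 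The step demanding the most care is not any computation but the bookkeeping of initial data: one must note that $\mathscr{D}_{x_0}$, $v_0$ and the bijection depend on $x_0$ only through $Ex_0$, and that the surjectivity of $g\mapsto(x,u)$ must be used over the whole finite-horizon set $\mathscr{D}_{x_0}(t_1)$, not merely over trajectories extendable to $[0,+\infty)$.
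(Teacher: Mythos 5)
Your proposal is correct and follows essentially the same route as the paper's own (much terser) proof: both arguments transport the DAE problem onto the associated linear system via Theorem~\ref{dae2lin:theo}, using surjectivity of $g\mapsto(x,u)$ onto $\mathscr{D}_{x_0}(t_1)$ together with the cost identity $\mathscr{J}(v_0,g,t_1)=J(x,u,t_1)$, and then take $\limsup$. Your write-up merely makes explicit the details the paper leaves implicit (the block decomposition giving $\nu=(x^T,u^T)^T$ and the use of $ED_s=0$ for the terminal term).
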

 \begin{proof}[Proof of Theorem \ref{lqg:eq}]
  Assume that $I=[0,t_1] \cap [0,+\infty)$, $t_1 \in [0,+\infty]$.
  The theorem follows by noticing that for any $g \in L^2_{loc}(I,\mathbb{R}^k)$,
  the output $(x,u)$ of $\mathscr{S}$ from $v_0=\LMAP(Ex_0)$ has the property
  that $(x,u) \in \mathcal{D}_{x_0}(t_1)$, and if
  $t_1 < +\infty$, then
  $J(x,u,t_1)=\mathscr{J}(\LMAP(Ex_0),g,t_1)$ and if $I=[0,+\infty)$, then
  $J(x,u)=\mathscr{J}(\LMAP(Ex_0),g)$. Moreover, any element of
  $\mathcal{D}_{x_0}(t_1)$ arises as an output of $\mathscr{S}$ for some $g \in L^2_{loc}(I,\mathbb{R}^k)$.
 \end{proof}
 The solution of associated LQ problem can be derived using
  classical results, see \cite{KwakernaakBook}.
 \begin{Theorem}
 \label{opt:control}
 Let $\mathcal{CL}(A_l,B_l,C_l,D_l)$ be the LQ problem associated with
 $\mathcal{C}(E,\hat{A},\hat{B},Q,R,Q_0)$.
 Assume that $(A_l,B_l)$ is stabilizable. 
  Define $S=\begin{bmatrix} Q & 0 \\ 0 & R \end{bmatrix}$.
    Consider the algebraic Riccati equation
    \begin{equation}
    \label{ARE}
    \begin{split}
       & 0 = PA_l+A^T_lP- K^T(D^T_lSD_l)K + C^T_lSC_l. \\
       & K=(D^T_lSD_l)^{-1}(B^T_lP+D^T_lSC_l).
    \end{split}
    \end{equation}
    Then \eqref{ARE} has a unique solution $P > 0$, and 
    $A_l- B_lK$ is a stable matrix. Moreover, if $g^{*}$ is
    defined as
   \begin{equation}
   \label{opt8}
    \begin{split}
      & \dot v^{*} = A_lv^{*}+B_lg^{*} \mbox{ and } v^{*}(0)=v_0 \\
      & g^{*} = - Kv^{*}, \\
   \end{split}
   \end{equation}
    then $g^{*}$ is a solution of $\mathcal{CL}(A_l,B_l,C_l,D_l)$ for the initial
    state $v_0$ and
    $v_0^TPv_0=\mathscr{J}(v_0,g^{*})$.
\end{Theorem}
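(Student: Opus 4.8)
My plan is to recognize $\mathcal{CL}(A_l,B_l,C_l,D_l)$ as a classical infinite-horizon linear--quadratic problem with cross terms and then to verify its standard hypotheses using the geometric structure supplied by Theorem \ref{dae2lin:theo}. The running cost is $\nu^TS\nu=(C_lv+D_lg)^TS(C_lv+D_lg)$ with $S>0$; since $\Rank D_l=k$, the input weight $D_l^TSD_l$ is positive definite, so $K$ in \eqref{ARE} is well defined, and expanding $K^T(D_l^TSD_l)K=(PB_l+C_l^TSD_l)(D_l^TSD_l)^{-1}(B_l^TP+D_l^TSC_l)$ shows that \eqref{ARE} is exactly the algebraic Riccati equation of the LQ problem with state weight $C_l^TSC_l$, input weight $D_l^TSD_l$ and cross weight $C_l^TSD_l$. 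I would then quote the classical LQ theory (e.g.\ \cite{KwakernaakBook}) for the existence of a stabilizing solution and the optimality of the feedback $g^{*}=-Kv^{*}$, once the relevant controllability/observability conditions are checked.

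The first thing I would establish is that the only trajectory of $\mathscr{S}=(A_l,B_l,C_l,D_l)$ with identically zero output is the trivial one. Suppose $v$ is the state response on $[0,t_1]$ to $v(0)=v_0$ and some input $g$, with $\nu=C_lv+D_lg\equiv 0$. Put $x_0=C_sv_0$, so that $\LMAP(Ex_0)=(EC_s)^{+}(EC_s)v_0=v_0$; then by Theorem \ref{dae2lin:theo} the pair $x=C_sv+D_sg$, $u=C_{inp}v+D_{inp}g$ lies in $\mathscr{D}_{x_0}(t_1)$ and satisfies $\LMAP(Ex)=v$. Since $\nu=(x^T,u^T)^T$, $\nu\equiv 0$ forces $x\equiv 0$, hence $Ex\equiv 0$, hence $v=\LMAP(Ex)\equiv 0$, and in particular $v_0=0$. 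Thus the weakly unobservable subspace of $\mathscr{S}$ is $\{0\}$, so $\mathscr{S}$ has no invariant zeros; together with $\Rank D_l=k$ and the assumed stabilizability of $(A_l,B_l)$, this is the regularity hypothesis under which \eqref{ARE} has a unique stabilizing solution $P\ge 0$ with $A_l-B_lK$ stable.

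Next I would rewrite \eqref{ARE}, using $A_K:=A_l-B_lK$, as the Lyapunov identity $PA_K+A_K^TP=-(C_l-D_lK)^TS(C_l-D_lK)$. If $v_0$ lay in the unobservable subspace of $(A_K,S^{1/2}(C_l-D_lK))$, then $v^{*}(t)=e^{A_Kt}v_0$ together with $g^{*}=-Kv^{*}$ would be a trajectory of $\mathscr{S}$ with zero output, so $v_0=0$ by the previous step; hence this pair is observable and, $A_K$ being stable, $P>0$. Then, for fixed $v_0$, I would integrate $\tfrac{d}{dt}(v^{*T}Pv^{*})$ along the closed loop to get $\int_0^{t_1}\nu^{*T}S\nu^{*}\,dt=v_0^TPv_0-v^{*}(t_1)^TPv^{*}(t_1)$; since $A_K$ is stable, $v^{*}(t_1)\to 0$, so $\int_0^{\infty}\nu^{*T}S\nu^{*}\,dt=v_0^TPv_0$ and the terminal penalty $v^{*}(t_1)^TE^TC_s^TQ_0EC_sv^{*}(t_1)\to 0$, whence $\mathscr{J}(v_0,g^{*})=v_0^TPv_0$. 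For the matching lower bound I would drop the nonnegative terminal penalty ($Q_0\ge 0$) and use that $\inf_g\int_0^{t_1}\nu^TS\nu\,dt=v_0^T\Pi_{t_1}v_0$, where $\Pi_{t_1}$ solves the Riccati differential equation on $[0,t_1]$ with zero terminal data and $\Pi_{t_1}\to P$ by classical theory; for the upper bound, $\inf_g\mathscr{J}(v_0,g,t_1)\le\mathscr{J}(v_0,g^{*}|_{[0,t_1]},t_1)\to v_0^TPv_0$. Combining, $\limsup_{t_1\to\infty}\inf_g\mathscr{J}(v_0,g,t_1)=v_0^TPv_0=\mathscr{J}(v_0,g^{*})$, which is \eqref{opt_final}; hence $g^{*}$ solves $\mathcal{CL}(A_l,B_l,C_l,D_l)$ with value $v_0^TPv_0$.

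The step I expect to be the main obstacle is the first one: extracting from Theorem \ref{dae2lin:theo} the precise structural hypothesis (triviality of the weakly unobservable subspace, equivalently left invertibility together with absence of invariant zeros) that is needed to invoke the cross-term version of the classical LQR theorem, and then arguing that it delivers the strict inequality $P>0$ and the stability of $A_l-B_lK$ without imposing any further detectability assumption. By contrast, the $\limsup$/$\inf$ bookkeeping in the last paragraph is routine once the convergence of the finite-horizon Riccati solution to the stabilizing $P$ is cited.
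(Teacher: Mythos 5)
Your proof is correct, and it rests on exactly the two structural facts the paper extracts from Theorem \ref{dae2lin:theo}: $\Rank D_l=k$ together with $S>0$ makes the input weight $D_l^TSD_l$ nonsingular, and $EC_s$ having full column rank with $ED_s=0$ supplies the observability-type condition that lets one dispense with any additional detectability hypothesis. The packaging, however, differs. The paper first applies the preliminary feedback transformation $g=\hat F v+Uw$ with $\hat F=-(D_l^TSD_l)^{-1}D_l^TSC_l$ and $U=-(D_l^TSD_l)^{-1/2}$ to eliminate the cross terms, checks stabilizability of $(A_l+B_l\hat F,B_lU)$ and observability of $(S^{1/2}(C_l+D_l\hat F),A_l+B_l\hat F)$ --- the latter precisely via the injectivity of $E(C_s+D_s\hat F)=EC_s$ --- and then cites \cite[Theorem 3.7]{KwakernaakBook} wholesale for existence and uniqueness of $P>0$, stability of $A_l-B_lK$, and the value $v_0^TPv_0=\lim_{t_1\to\infty}\inf_{w}\widehat{\mathscr{J}}(v_0,w,t_1)$. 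You instead keep the cross terms in the ARE, phrase the observability requirement as triviality of the weakly unobservable subspace of $(A_l,B_l,C_l,D_l)$ (your argument via the DAE correspondence is logically equivalent to the paper's rank computation), and then re-derive positivity of $P$, the closed-loop value, and the $\limsup$/$\inf$ identity by hand from the Lyapunov form $PA_K+A_K^TP=-(C_l-D_lK)^TS(C_l-D_lK)$ together with the convergence $\Pi_{t_1}\to P$ of the finite-horizon Riccati solution. What your route buys is a more self-contained verification of the optimality claim \eqref{opt_final}, which the paper delegates entirely to the citation; what the paper's route buys is that a single standard theorem covers every assertion at once. Both arguments are sound.
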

\begin{proof}[Proof of Theorem \ref{opt:control}]
   Let us first apply the feedback transformation $g=\hat{F}v+Uw$ to $\mathscr{S}=(A_l,B_l,C_l,D_l)$
  with $U=-(D^T_lSD_l)^{-1/2}$ and $\hat{F}=-(D^T_lSD_l)^{-1}D^T_lSC_l$. Consider the linear system
  \begin{equation}
  \label{opt:st1}
   \begin{split}
   & \dot v = (A_l+B_l\hat{F})v+B_lUw \mbox{ and } v(0)=v_0 \\
   \end{split}
  \end{equation}
  For any $w \in L^2_{loc}(I)$, where $I=[0,t_1]$ of $I=[0,+\infty)$,
  the state trajectory $v$ of \eqref{opt:st1} equals the state trajectory of
  $\mathscr{S}$ for the input $g=\hat{F}v+Uw$ and initial state $v_0$.
  Moreover, all inputs $g$ of $\mathscr{S}$ can be represented in such a way.
  Define now
   \begin{equation*}
  \begin{split}
      & \widehat{\mathscr{J}}(v_0,w,t)= v^T(t)E^TC_s^T\bar{Q}_0 EC_{s} v(t)+ \\
       & +\int_0^t (v^T(t)(C_l+D_l\hat{F})^TS(C_l+D_l\hat{F})v(t)+w^T(t)w(t))dt,
   \end{split}
  \end{equation*}
  where $v$ is a solution of \eqref{opt:st1}.
  It is easy to see that for $g=\hat{F}v+Uw$, $\mathscr{J}(v_0,g,t)=\widehat{\mathscr{J}}(v_0,w,t)$.

   Consider now the problem of
   minimizing $\lim_{t \rightarrow \infty} \widehat{\mathscr{J}}(v_0,w,t)$.
   The solution of this problem can be found using \cite[Theorem 3.7]{KwakernaakBook}.
   To this end, notice
   that $(A_l+B_l\hat{F},B_lU)$ is stabilizable and $(S^{1/2}(C_l+D_l\hat{F}),A_l+B_l\hat{F})$ is observable.
   Indeed, it is easy to see that stabilizability of $(A_l,B_l)$ implies that of $(A_l+B_l\hat{F},B_lU)$.
   Observability  $(S^{1/2}(C_l+D_l\hat{F}),A_l+B_l\hat{F})$ is implied by the fact that
   by
  Theorem \ref{dae2lin:theo}, $EC_s$ is full column rank and $ED_s=0$, and thus
   $E(C_s+D_s\hat{F})=EC_s$ is full column rank.
   Furthermore, notice that \eqref{ARE} is equivalent to the algebraic Riccati equation
   described in \cite[Theorem 3.7]{KwakernaakBook} for the problem of minimizing
   $\lim_{t \rightarrow \infty} \widehat{\mathscr{J}}(v_0,w,t)$.
 Hence, by \cite[Theorem 3.7]{KwakernaakBook},
 \eqref{ARE}  has a unique positive definite solution $P$, 
  and $A_l-B_l(\hat{F}+U^TB_lP)=A_l-B_lK$ is a stable matrix. 
   From \cite[Theorem 3.7]{KwakernaakBook}, there exists $w^{*}$ such that
   $\lim_{t \rightarrow \infty} \widehat{\mathscr{J}}(v_0,w^{*},t)$ is minimal.
  and $v_0^TPv_0 = \lim_{t \rightarrow \infty} \widehat{\mathscr{J}}(v_0,w^{*},t)$.
  From \cite[Theorem 3.7]{KwakernaakBook} we can also deduce that
  $v_0^TPv_0=\lim_{t_1 \rightarrow \infty} \inf_{w \in L^2(0,t_1)} \widehat{\mathscr{J}}(v_0,w,t_1)$.

  Hence, $g^{*}=\hat{F}v^{*}+Uw^{*}$ is a solution of $\mathcal{CL}(A_l,B_l,C_l,D_l)$ for the initial state
  $v_0$, where $v^{*}$ is the solution of \eqref{opt:st1} which corresponds to $w=w^{*}$.
  A routine computation reveals that $(v^{*},g^{*})$ satisfies \eqref{opt8}.

\end{proof}
Combining Theorem \ref{opt:control} and Theorem \ref{lqg:eq}, we
can solve
the optimal control problem for DAEs as follows.
 \begin{Corollary}
 \label{col2}
 Consider the control problem
 $\mathcal{C}(E,\hat{A},\hat{B},Q,R,Q_0)$ and let
 $\mathcal{CL}(A_l,B_l,C_l,D_l)$ be an LQ problem associated with $\mathcal{C}(E,\hat{A},\hat{B},Q,R,Q_0)$.
 Assume that $(A_l,B_l)$ is stabilizable.
 Let $P$ be the unique positive definite solution of \eqref{ARE} and
 let $K$ be as in \eqref{ARE}.
 Let $C_s,C_{inp}$, $D_s$, $D_{inp}$ be the decomposition of $C_l$ and $D_l$ as
 defined in Theorem \ref{dae2lin:theo} and let $\LMAP=(EC_s)^{+}$.
 Then the dynamical controller $\mathscr{C}=(A_c,B_{c},C_x,C_u)$ with
 \begin{align*}
   & A_c=A_l-B_lK \mbox{, } C_x=C_s-D_sK \\
   & C_u=(C_{inp}-D_{inp}K) \mbox{ and }  B_c=\LMAP.
 \end{align*}
 is a solution of $\mathcal{C}(E,\hat{A},\hat{B},Q,R,Q_0)$.
\end{Corollary}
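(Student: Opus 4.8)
The plan is to compose the two reductions already established: Theorem \ref{lqg:eq}, which says that solving the associated LQ problem $\mathcal{CL}(A_l,B_l,C_l,D_l)$ with initial state $v_0=\LMAP(Ex_0)$ produces a solution of the DAE control problem $\mathcal{C}(E,\hat{A},\hat{B},Q,R,Q_0)$; and Theorem \ref{opt:control}, which produces the optimal input $g^{*}=-Kv^{*}$ for $\mathcal{CL}$ in closed-loop feedback form. First I would invoke Theorem \ref{opt:control}: since $(A_l,B_l)$ is stabilizable by hypothesis, \eqref{ARE} has a unique positive definite solution $P$, the matrix $A_l-B_lK$ is stable, and the feedback $g^{*}=-Kv^{*}$, with $\dot v^{*}=(A_l-B_lK)v^{*}$, $v^{*}(0)=v_0$, solves $\mathcal{CL}(A_l,B_l,C_l,D_l)$ for any $v_0$, with optimal cost $v_0^TPv_0=\mathscr{J}(v_0,g^{*})$.

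Next I would push this solution through the linear system $\mathscr{S}=(A_l,B_l,C_l,D_l)$ using the output equations $x=C_sv^{*}+D_sg^{*}$ and $u=C_{inp}v^{*}+D_{inp}g^{*}$ from Theorem \ref{dae2lin:theo}. Substituting $g^{*}=-Kv^{*}$ gives $x^{*}=(C_s-D_sK)v^{*}=C_xv^{*}$ and $u^{*}=(C_{inp}-D_{inp}K)v^{*}=C_uv^{*}$, while the state equation becomes $\dot v^{*}=(A_l-B_lK)v^{*}=A_cv^{*}$. By Theorem \ref{lqg:eq}, since $v_0=\LMAP(Ex_0)=B_cEx_0$ and $g^{*}$ solves $\mathcal{CL}$, the pair $(x^{*},u^{*})$ lies in $\mathscr{D}_{x_0}(\infty)$ whenever $\mathscr{D}_{x_0}(\infty)\neq\emptyset$, and it satisfies $J(x^{*},u^{*})=\limsup_{t_1\to\infty}\inf_{(x,u)\in\mathscr{D}_{x_0}(t_1)}J(x,u,t_1)$, which is exactly the optimality condition \eqref{opt:eq2} required of the dynamic controller. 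It remains to check the two algebraic side conditions in Problem \ref{opt:contr:def}: $A_c=A_l-B_lK$ is stable (already from Theorem \ref{opt:control}), and $B_cEC_x=I$. For the latter, $B_cEC_x=\LMAP\,E\,(C_s-D_sK)=(EC_s)^{+}EC_s-(EC_s)^{+}ED_sK$; by Theorem \ref{dae2lin:theo} we have $ED_s=0$ and $\Rank EC_s=\hat n$, so $(EC_s)^{+}EC_s=I_{\hat n}$ and the second term vanishes, giving $B_cEC_x=I_{\hat n}$ as needed. Also by Theorem \ref{dae2lin:theo}, $\LMAP(Ex^{*})=v^{*}$, so the initial condition $s(0)=B_cEx_0$ in \eqref{dyncontr:eq} is consistent with $v^{*}(0)=\LMAP(Ex_0)$.

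The only genuinely non-trivial step is verifying that the $\limsup$-type optimality value transported by Theorem \ref{lqg:eq} matches the definition \eqref{opt:eq2}, together with the subtlety that the dynamic controller must work uniformly for all $x_0$ with $\mathscr{D}_{x_0}(\infty)\neq\emptyset$; but this is handled because the feedback gain $K$ and the matrices $A_c,B_c,C_x,C_u$ are independent of $x_0$, and Theorem \ref{lqg:eq} already quantifies over all such $x_0$. Everything else is routine substitution and bookkeeping with pseudoinverses, so the corollary follows by assembling Theorems \ref{dae2lin:theo}, \ref{lqg:eq}, and \ref{opt:control}.
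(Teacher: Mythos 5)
Your proposal is correct and follows exactly the route the paper intends: the paper gives no separate proof of Corollary \ref{col2}, presenting it as the immediate combination of Theorem \ref{opt:control} (Riccati feedback solving the associated LQ problem) with Theorem \ref{lqg:eq} (transport of optimality back to the DAE), which is precisely what you do. Your explicit verification of the side condition $B_cEC_x=I_{\hat n}$ via $ED_s=0$ and the full column rank of $EC_s$ is a useful detail the paper leaves implicit.
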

 \begin{Remark}[Computation and existence of a solution]
  The existence of solution for Problem \ref{opt:contr:def} and its computation
  depend only on the matrices $(E,\hat{A},\hat{B},Q,R,Q_0)$.
  Indeed, a linear system $\mathscr{S}$ associated with $(E,\hat{A},\hat{B})$
  can be computed from $(E,\hat{A},\hat{B})$, and the solution of the
  associated LQ problem can be computed using $\mathscr{S}$ and the
  matrices $Q,Q_0,R$.
  Notice that the only condition for the existence of a solution is that
  $\mathscr{S}=(A_l,B_l,C_l,D_l)$ is stabilizable. Since all linear systems associated with
  the given DAE are feedback equivalent, stabilizability of an associated
  linear system does not depend on the choice of the linear system.
  Thus, stabilizability of $\mathscr{S}$ can be regarded as a property of $(E,\hat{A},\hat{B})$. The link between stabilizability of $\mathscr{S}$ and the
  classical stabilizability for DAEs remains a topic for future research.
 \end{Remark}

\subsection{Observer design for DAE}
\label{sec:obs}
 By applying Corollary \ref{col2} and Theorem \ref{p:5}, we obtain the following
 procedure for solving Problem \ref{problem:obs}.
 \begin{itemize}
 \item{\textbf{Step 1.}}
     Consider the dual DAE of the form \eqref{dae:sys}, such that
     $F^T=E$, $A^T=\hat{A}$ and $-H^T=\hat{B}$.
    Construct a linear system $\mathscr{S}=(A_l,B_l,C_l,D_l)$ associated
    with this DAE, as described in Definition \ref{linassc:def}.

 \item{\textbf{Step 2.}}
    Check if $(A_l,B_l)$ is stabilizable. If it is, let
    \[ X=\begin{bmatrix} Q^{-1} & 0 \\ 0 & R^{-1} \end{bmatrix}. \]
    Consider the algebraic Riccati equation
    \begin{equation}
     \label{sec:obs:eq2}
    \begin{split}
       & 0 = PA_l+A^T_lP- K^T(D^T_lXD_l)K + C^T_lXC_l. \\
       & K=(D^T_lXD_l)^{-1}(B^T_lP+D^T_lXC_l).
    \end{split}
    \end{equation}
    The equation \eqref{sec:obs:eq2} has a unique solution $P > 0$.
 \item{\textbf{Step 3.}}
    The dynamical observer $\mathscr{O}_{\widehat{U}}$ which is a solution of Problem 
    \ref{problem:obs} is of the form:
   \[
     \begin{split}
     & \dot r(t) = (A_l-B_lK)^Tr(t) + (C_l-D_lK)^T\begin{bmatrix} 0 \\ y(t) \end{bmatrix} \\
      & \mathcal{O}_{\widehat{U}}(y)(t)=\ell^TF\LMAP^T r(t), 
     \end{split}
   \]
  and $\widehat{U}(t,s)=(C_l-D_lK)e^{(A_l-B_lK)(t-s)}\LMAP F^T\ell$.
  The observation error equals
  \[ \sigma(\widehat{U},\ell) = \ell^TF\LMAP^TP\LMAP F^T\ell. \]
  Recall that
  $\LMAP=(F^TC_s)^{+}$, where $C_s$ is the submatrix of $C_l$ formed by its first $n$ rows.
\end{itemize}
 \begin{Remark}[Conditions for existence of an observer]
 The existence of the observer above depends only on whether
 the chosen linear system associated
 with the dual DAE is stabilizable. As it was mentioned before, the latter
 is a property of the tuple $(F,A,H)$.
  Hence, the property that the linear system associated
 with the dual DAE is stabilizable
 could be thought of as a sort of detectability property.
 The relationship between this property and the detectability notions 
 established in the literature remains a topic of future research.
\end{Remark}

\section{Conclusions}
 We have presented a solution to the minimax observer design problem and the infinite horizon
 linear quadratic control problem for linear DAEs. We have also shown that
 these two problems are each other's dual.  The main novelty of this contribution
 is that we made no solvability assumptions on DAEs. The only condition we need is 
 that the LTI associated with the dual DAE should be stabilizable. We conjecture
 that this condition is also a necessary one. The clarification of this issue remains
 a topic of future research.


\appendix
 \begin{proof}[Proof of Lemma \ref{lemm:uniq}]
  We will use the following terminology in the sequel.
  Consider two linear systems $(A_1,B_1,C_1,D_1)$ and $(A_2,B_2,C_2,D_2)$
  with $n$ states, $p$ outputs and $m$ inputs. 
   A tuple $(T,F,G,U)$ of matrices,
    $T \in \mathbb{R}^{n \times n}$, $U \in \mathbb{R}^{m \times m}$,
    $V \in \mathbb{R}^{p \times p}$,
   $F \in \mathbb{R}^{m \times n}, G \in \mathbb{R}^{n \times p}$ such that $T,U$ and $V$ are non-singular, is said to be a \emph{feedback equivalence with output injection} from $(A_1,B_1,C_1,D_1)$ to $(A_2,B_2,C_2,D_2)$, if
  \begin{align*}
       & T(A_1+B_1F+GC_1+GD_1F)T^{-1}=A_2 \\
       & V(C_1+D_1F)T^{-1} = C_2 \\
       & T(B_1+GD_1)U=B_2 \mbox{ and } VD_1U=D_2
  \end{align*}
  If $G=0$, $V=I_p$, then $(T,F,G,U)$ is just a feedback equivalence and 
  $(A_1,B_1,C_1,D_1)$ and $(A_2,B_2,C_2,D_2)$ are feedback equivalent.
  In this case (i.e. when $G=0$, $V=I_p$), 
  we denote this transformation by $(T,F,U)$.

 Let $S_i,T_i \in \mathbb{R}^{n \times n}$ 
 be invertable, such that
 $S_iET_i = \begin{bmatrix} I_r & 0 \\ 0 & 0 \end{bmatrix}$, $i=1,2$.
 Let
 \[
   \begin{split}
    & S_i\hat{A}T_i=\begin{bmatrix} A_i & A_{12,i} \\
                    A_{21,i} & A_{22,i}
     \end{bmatrix} \\
      & S_i\hat{B}=\begin{bmatrix} B_{1,i} \\ B_{2,i} \end{bmatrix}, \\
    &  G_i = \begin{bmatrix} A_{12,i}, & B_{1,i} \end{bmatrix} \\
    &  \widetilde{C}_i = A_{21,i} \mbox{ and } \widetilde{D}_i = \begin{bmatrix} A_{22,i}, & B_{2,i} \end{bmatrix}
   \end{split}
 \]
 and consider the linear systems
 \[
  \mathscr{S}_i\left\{
  \begin{split}
   & \dot{p}_i = A_ip_i+G_iq_i  \\
    & z_i = \widetilde{C}_ip_1+\widetilde{D}_iq_i
  \end{split}\right.
 \]
 for $i=1,2$.
 Denote by $\V_i=\V(\mathscr{S}_i)$ the set of weakly observable states of
 $\mathscr{S}_i$,$i=1,2$. Denote by $\mathcal{F}(\V_i)$,
 $i=1,2$, the set of all state feedback matrices $F \in \mathbb{R}^{n \times m}$ such that 
  $(A_i+G_iF)\V_i \subseteq \V_i$, $(\widetilde{C}_i+\widetilde{D}_iF)\V_i=0$. Pick $F_i  \in \mathcal{F}(\V_i)$, $i=1,2$ and pick 
 full column rank matrices $L_i$, $i=1,2$ such that 
 $\IM L_i=G_i^{-1}(\V_i) \cap \ker \widetilde{D}_i$. 
 In order to prove the lemma, it is enough to show that
 $\Rank L_1=\Rank L_2=k$, and 
 there exist invertable linear maps
 $T \in \mathbb{R}^{r \times r}$,  $U \in \mathbb{R}^{k \times k}$,
 and a matrix $F \in \mathbb{R}^{r \times k}$ such that
 \begin{subequations}
 \label{feedback:pf:eq1}
   \begin{align}
    &  T(\V_1)=\V_2 \label{feedback:pf:eq1:eq1} \\  
    & (A_1+G_1F_1+G_1L_1F)\V_1 \subseteq \V_1 
      \label{feedback:pf:eq1:eq2}
\\,
    & \forall x \in \V_1: \nonumber \\
    & T(A_1+G_1F_1+G_1L_1F)T^{-1}x=(A_2+G_2F_2)x 
      \label{feedback:pf:eq1:eq3} \\
    & TG_1L_1U=G_2L_2 
      \label{feedback:pf:eq1:eq4} \\
    &  \begin{bmatrix} T_1 & 0 \\ 0 & I_m \end{bmatrix} \begin{bmatrix} 0_{r \times k} \\ L_1U \end{bmatrix} = \begin{bmatrix} T_2 & 0 \\ 0 & I_m \end{bmatrix} \begin{bmatrix} 0_{r \times k} \\ L_2 \end{bmatrix} 
      \label{feedback:pf:eq1:eq5} \\
   & \forall x \in \V_1: \nonumber \\
   &  \begin{bmatrix} T_1 & 0 \\ 0 & I_m \end{bmatrix} \begin{bmatrix} I_r \\ (F_1+L_1F) \end{bmatrix} x= \begin{bmatrix} T_2 & 0 \\ 0 & I_m \end{bmatrix} \begin{bmatrix} I_r \\ F_2 \end{bmatrix} Tx,
      \label{feedback:pf:eq1:eq6} 
  \end{align}
 \end{subequations}
where $0_{r \times k}$ denotes the $r \times k$ matrix with all zero entries.
Indeed, the associated linear systems arising from the two
choices $T_i,S_i,F_i,L_i$, $i=1,2$ are in fact isomorphic to
the following linear system defined on $\V_i$, $i=1,2$,
\begin{equation}
 \label{feedback:pf:eq11}
  \LLIN_i\left\{\begin{split}
   & \dot p = (A_i+G_iF_i)|_{\V_i}p+G_iL_iw \\
   & (x,u)^T = 
    \begin{bmatrix} T_i & 0 \\ 0 & I_m \end{bmatrix} (F_i|_{\V_i}p+L_iw)
  \end{split}\right.
\end{equation}
If $(T,F,U)$ satisfy \eqref{feedback:pf:eq1}, it then
follows that $(T,F,U)$ is a feedback equivalence between 
$\LLIN_1$ and $\LLIN_2$.

In order to find the matrices $F,U,T$, notice that
\[ T_2^{-1}T_1=\begin{bmatrix} R_{11} & 0 \\ R_{21} & R_{22} \end{bmatrix}
\]
 for $R_{11} \in \mathbb{R}^{r \times r}$, $R_{22} \in \mathbb{R}^{(n-r) \times (n-r)}$,
 $R_{21} \in \mathbb{R}^{(n-r) \times r}$. Indeed,
 assume
 $T_2^{-1}T_1\begin{bmatrix} 0 \\ q \end{bmatrix} = \begin{bmatrix} \bar{p} \\ \bar{q} \end{bmatrix}$ for some $q,\bar{q} \in \mathbb{R}^{n-r}$, $\bar{p} \in \mathbb{R}^{r}$.
 Then
 \[ 
  \begin{split}
  & \begin{bmatrix} \bar{p} \\ 0 \end{bmatrix}  =S_2ET_2T_2^{-1}T_1 \begin{bmatrix} 0 \\ q \end{bmatrix}= \\
  & S_{2}S_1^{-1} S_1ET_1\begin{bmatrix} 0 \\ q \end{bmatrix} = S_2S_1^{-1}0=0.
  \end{split}
\]
 Hence, $T_2^{-1}T_1\begin{bmatrix} 0 \\ q \end{bmatrix} = \begin{bmatrix} 0 \\ \bar{q} \end{bmatrix}$ from which the statement follows.
 In a simillar fashion
 \[ S_2S_1^{-1}=\begin{bmatrix} H_{11} & H_{21} \\ 0 & H_{22} \end{bmatrix}, \] where
 $H_{11} \in \mathbb{R}^{r \times r}$, $H_{22} \in \mathbb{R}^{(n-r) \times (n-r)}$,
 $H_{12} \in \mathbb{R}^{r \times n-r}$, moreover,
 \[ H_{11} = R_{11}. \] 
 Indeed,
 \[ 
   \begin{split}
  &  S_2S_1^{-1}\begin{bmatrix} p \\ 0 \end{bmatrix} = S_2S_1^{-1} S_1ET_1\begin{bmatrix} p \\ 0 \end{bmatrix} = \\
  &  S_2ET_2T_2^{-1}T_1\begin{bmatrix} p \\ 0 \end{bmatrix} = \begin{bmatrix} \bar{p} \\ 0 \end{bmatrix}
   \end{split}
\] for some $\bar{p} \in \mathbb{R}^r$.
  Finally,
 \[ \begin{bmatrix} H_{11} & 0 \\ 0 & 0 \end{bmatrix} = S_2S_1^{-1} (S_1ET_1) = (S_2ET_2)T_2^{-1}T_1 = \begin{bmatrix} R_{11} & 0 \\ 0 & 0 \end{bmatrix}. \]
  Hence, 
  $R_{11}=H_{11}$.

 From $S_2\hat{A}T_2 = S_2S_1^{-1} S_1 \hat{A}T_1(T_2^{-1}T_1)^{-1}$ it and $S_2\hat{B}=S_2S_1^{-1}S_1\hat{B}$ 
 follows that
 \begin{equation}
 \label{feedback:pf:eq2}
    \begin{split}
     & A_2=R_{11}(A_1+G_1\hat{F}+\hat{G}\widetilde{C}_1+\hat{G}\widetilde{D}_1\hat{F})R_{11}^{-1}  \\
     & G_2=R_{11}(G_1 +\hat{G}\widetilde{D}_1)\hat{U} \\
     & \widetilde{D}_{2}=\hat{V}\widetilde{D}_1\hat{U} \mbox{ and }
       \widetilde{C}_{2} = \hat{V}(\widetilde{C}_1+\widetilde{D}_1\hat{F})R_{11}^{-1}
    \end{split}
 \end{equation}
 where $\hat{F}=\begin{bmatrix} -R_{22}^{-1}R_{12} \\ 0 \end{bmatrix}$, $\hat{G}=R_{11}^{-1}H_{12}$, 
 $\hat{U}=\begin{bmatrix} R_{22}^{-1} & 0 \\ 0 & I_m \end{bmatrix}$,
 and 
 $\hat{V}=H_{22}$.

 We then claim that the following choice of matrices
\begin{equation}
\label{feedback:pf:eq4}
\begin{split}
   & T=R_{11} \mbox{ and } U=L_1^{+}\hat{U}L_2  \\
   & F=L_{1}^{+}(\hat{F}+\hat{U}F_2R_{11}-F_1) 
\end{split}
\end{equation}
satisfies \eqref{feedback:pf:eq1}.
We prove \eqref{feedback:pf:eq1:eq1} -- \eqref{feedback:pf:eq1:eq6}
one by one.

\emph{Proof of \eqref{feedback:pf:eq1:eq1}:}
 Indeed, from \eqref{feedback:pf:eq2} it then follows that 
 $\mathscr{S}_1$ and
 $\mathscr{S}_2$ are related by a feedback equivalence with output injection
 $(R_{11},\hat{F},\hat{G},\hat{U},\hat{V})$.
 From \cite[page 169, Exercise 7.1]{TrentelmanBook} it follows that
 $\V_2=R_{11}(\V_1)=T\V_1$.

\emph{Proof of \eqref{feedback:pf:eq1:eq2}:}
From the definition of $F_2$ it follows
$(\widetilde{C}_2+\widetilde{D}_2F_2)\V_2=\{0\}$, and
$(A_2+G_2F_2)\V_2 \subseteq \V_2$.
Substituting the expressions for $\widetilde{C}_2,\widetilde{D}_2,A_2,G_2$
from \eqref{feedback:pf:eq2} and using that $\V_2=R_{11}\V_1$ and
that $R_{11},\hat{V}$ are invertable, it follows that
for all $x \in \V_1$, 
\begin{equation}
\label{feedback:pf:eq1:eq21}
\begin{split}
 & (\widetilde{C}_1+\widetilde{D}_1(\hat{F}+\hat{U}F_2R_{11}))x=0 \\
 & (A_1+G_1(\hat{F}+\hat{U}F_2R_{11})+\hat{G}\widetilde{C}_1+\hat{G}\widetilde{D}_1(\hat{F}+\hat{U}F_{2}R_{11}))x=\\
& (A_1+G_1(\hat{F}+\hat{U}F_2R_{11}))x \in V_1 \\
\end{split}
\end{equation}
Hence, $(A_1+G_1F_1+G_1F)\V_1 \subseteq \V_1$.

\emph{Proof of \eqref{feedback:pf:eq1:eq3}:}
Since from the definition of $F_1$ it follows that
$(A_1+G_1F_1)x \in V_1$,
$(\widetilde{C}_1x+\widetilde{D}_1F_1x)=0$,
 for all $x \in \V_1$,
from \eqref{feedback:pf:eq1:eq21}, 
 it then follows that
for all $x \in \V_1$, $G_1(\hat{F}+\hat{U}F_2R_{11}-F_1)x \in \V_1$ and
$\widetilde{D}_1(\hat{F}x+\hat{U}F_2R_{11}-F_1)x=0$.
Hence, $(\hat{F}+\hat{U}F_2R_{11}-F_1)x \in \IM L_1$ and hence
\begin{equation*}
\label{feedback:pf:eq1:eq31}
L_1Fx=L_1L_1^{+}(\hat{F}+\hat{U}F_2R_{11}-F_1)x=(\hat{F}+\hat{U}F_2R_{11}-F_1)x
\end{equation*}
for all $x \in \V_1$. From this it follows that
\begin{equation}
\label{feedback:pf:eq1:eq32}
 x \in \V_1:
    F_1x+L_1Fx=(\hat{F}+\hat{U}F_2R_{11})x.
\end{equation}
From \eqref{feedback:pf:eq1:eq32} it then follows that 
$(A_1+G_1F_1+G_1L_1F)x=A_1x+G_1(\hat{F}+\hat{U}F_2R_{11})x$ for all
$x \in \V_1$. From this and \eqref{feedback:pf:eq2},
\eqref{feedback:pf:eq1:eq3} follows.

\emph{Proof of \eqref{feedback:pf:eq1:eq4}:}
Recall that 
 \( \IM L_2=\ker (\hat{V}\widetilde{D}_1\hat{U}) \cap (R_{11}G_1\hat{U})^{-1}(\V_2)=\hat{U}^{-1}(\ker \widetilde{D}_1 \cap G_1(\V_1))=\hat{U}^{-1}\IM L_1$.
Since $\hat{U}$ is invertable, it follows that $\Rank L_1=\Rank L_2=k$
and that 
\begin{equation}
\label{feedback:pf:eq1:eq41}
L_1U=L_1L_1^{+}\hat{U}L_2=\hat{U}L_2.
\end{equation}
Hence, using 
\eqref{feedback:pf:eq2} and $\widetilde{D}_2\hat{U}L_2=0$, it follows
that 
$TG_1L_1U=TG_1\hat{U}L_2=TG_1\hat{U}L_2+T\hat{G}\widetilde{D}_2\hat{U}L_2=G_2L_2$.

\emph{Proof of \eqref{feedback:pf:eq1:eq5}:}
 It is eaasy to see that 
\eqref{feedback:pf:eq1:eq5} is equivalent to
\begin{equation}
\label{feedback:pf:eq1:eq51}
 \begin{bmatrix} T_2 & 0 \\ 0 & I_m \end{bmatrix}^{-1} \begin{bmatrix} T_1 & 0 \\ 0 & I_m \end{bmatrix} 
  \begin{bmatrix} 0_{r \times k} \\ L_1U \end{bmatrix} = \begin{bmatrix} 0_{r \times k}  \\ L_2 \end{bmatrix}.
\end{equation}
We will show 
\eqref{feedback:pf:eq1:eq51}
To this end, notice that
\begin{equation}
\label{feedback:pf:eq1:eq52}
\begin{split}
& \begin{bmatrix} T_2 & 0 \\ 0 & I_m \end{bmatrix}^{-1} \begin{bmatrix} T_1 & 0 \\ 0 & I_m \end{bmatrix} = \begin{bmatrix}
  T_2^{-1}T_1 & 0 \\ 0 & I_m \end{bmatrix}= \\
  & \begin{bmatrix}
   R_{11} & 0 & 0 \\
   R_{21} & R_{22} & 0 \\
   0      & 0      & I_m
  \end{bmatrix}= 
  \begin{bmatrix} R_{11} & 0 \\ \begin{bmatrix} R_{21} \\ 0 \end{bmatrix} & \hat{U}^{-1} \end{bmatrix}
\end{split}
\end{equation}
Hence,
\begin{equation}
\label{feedback:pf:eq1:eq53}
  \begin{bmatrix} T_2 & 0 \\ 0 & I_m \end{bmatrix}^{-1} \begin{bmatrix} T_1      & 0 \\ 0 & I_m \end{bmatrix}  
  \begin{bmatrix} 0_{r \times k} \\ L_1U \end{bmatrix} =
  \begin{bmatrix} 0_{r \times k} \\
   \hat{U}^{-1}L_1U
 \end{bmatrix}.
\end{equation}
Using $L_1U=\hat{U}L_2$ proven above in \eqref{feedback:pf:eq1:eq41}, 
it follows that $\hat{U}^{-1}L_1U=L_2$ and hence 
\eqref{feedback:pf:eq1:eq53} implies 
\eqref{feedback:pf:eq1:eq51}.

\emph{Proof of \eqref{feedback:pf:eq1:eq6}:}
 Again, it is enough to show that
 \begin{equation}
 \label{feedback:pf:eq1:eq61}
 \begin{split}
    & \forall x \in \V_1: \\ 
    & \begin{bmatrix} T_2 & 0 \\ 0 & I_m \end{bmatrix}^{-1}
    \begin{bmatrix} T_1 & 0 \\ 0 & I_m \end{bmatrix} \begin{bmatrix} I_r \\ (F_1+L_1F) \end{bmatrix} x= 
\begin{bmatrix} I_r \\ F_2 \end{bmatrix} R_{11} x.
\end{split}
\end{equation}
 From \eqref{feedback:pf:eq1:eq52} it follows that
 \begin{equation}
 \label{feedback:pf:eq1:eq62}
 \begin{split}
 & \begin{bmatrix} T_2 & 0 \\ 0 & I_m \end{bmatrix}^{-1}
   \begin{bmatrix} T_1 & 0 \\ 0 & I_m \end{bmatrix} \begin{bmatrix} I_r \\      (F_1+L_1F) \end{bmatrix} = \\
 &  \begin{bmatrix} R_{11} \\ \begin{bmatrix} R_{21} \\ 0 \end{bmatrix} + \hat{U}^{-1}(F_1+L_1F) \end{bmatrix}.
 \end{split}
 \end{equation}
Notice that 
$\hat{U}^{-1}\hat{F}=\begin{bmatrix} -R_{21}^T \\ 0 \end{bmatrix}$ and
hence, using \eqref{feedback:pf:eq1:eq32}, 
\[ 
  \begin{split}
  & \begin{bmatrix} R_{21} \\ 0 \end{bmatrix}x + \hat{U}^{-1}(F_1+L_1F)x= \\
  & \begin{bmatrix} R_{21} \\ 0 \end{bmatrix}x + 
  \hat{F}x+F_2R_{11}x = F_2R_{11}x
 \end{split}
\]
for all $x \in \V_1$.
Combining this with \eqref{feedback:pf:eq1:eq62},
\eqref{feedback:pf:eq1:eq61} follows easily.

\end{proof}

\end{document}